\newtheorem{theorem}{Theorem}[section]
\newtheorem{prop}[theorem]{Proposition}
\newtheorem{definition}[theorem]{Definition}
\newtheorem{lemma}[theorem]{Lemma}
\numberwithin{equation}{section}
\title{Uniqueness in the Plateau Problem near Quadratic cones}
\author{Vishnu Nandakumaran}
\address{Department of Mathematics, University of Notre Dame, Notre Dame, IN, USA}
\email{vnandaku@nd.edu}
\author{G\'abor Sz\'ekelyhidi}
\address{Department of Mathematics,
Northwestern University,
Evanston, IL, USA}
\email{gaborsz@northwestern.edu}
\begin{document}

\begin{abstract}
We consider minimal hypersurfaces inside the unit ball whose boundary on the sphere is a small perturbation of the link of a minimizing quadratic cone. We show that such minimal surfaces are uniquely determined by their boundary condition. In particular the solutions of the Plateau problem are unique for  boundary conditions given by small perturbations of the link of a quadratic cone. 
\end{abstract}

\maketitle

\section{Introduction}

The classical Plateau problem asks for a surface of least area
spanning a given curve in $\mathbb{R}^3$. The existence of such a
surface was shown by Douglas~\cite{Douglas31} and Rad\'o~\cite{Rado30}, and a natural
question is to what extent solutions of Plateau's problem are
unique. Examples of Nitsche~\cite{Nitsche68} show that uniqueness does not
hold for all boundary curves, although it does hold for curves with
total curvature at most $4\pi$. Later, Morgan~\cite{Morgan78} showed that
for generic boundary curves in a suitable sense, Plateau's problem has a unique
solution.  Higher dimensional versions of this uniqueness theorem
were obtained by Morgan~\cite{Morgan82} in Euclidean space, while
Caldini-Marchese-Merlo-Steinbr\"uchel~\cite{CMMS24} extended this result
to a general setting of minimizing integral currents in Riemannian
manifolds.

In this paper we consider the question of the uniqueness of solutions
to Plateau's problem for some special class of boundaries in higher
dimensions. Suppose that $\mathcal{C}\subset \mathbb{R}^{n+1}$ is a
minimizing hypercone. It was shown by Hardt-Simon~\cite{SimonHardt} that if
$\mathcal{C}$ is smooth away from the origin, then there exists a
foliation of $\mathbb{R}^{n+1}\setminus \mathcal{C}$ by smooth minimal
hypersurfaces asymptotic to $\mathcal{C}$. This implies in particular,
that if we let $\Sigma = \mathcal{C} \cap \partial B_1$ be the link of
$\mathcal{C}$ in the unit sphere, then $\mathcal{C}\cap B_1$ is the
unique solution of Plateau's problem with boundary
$\Sigma$. A natural question is whether Plateau's
problem still has a unique solution if we slightly perturb the
boundary $\Sigma$, see for instance Wang~\cite[P4.1]{wang2020deformationssingularminimalhypersurfaces}.
This is closely related to the question of whether
in general the solutions of Plateau's problem are at least locally
unique.

Our main result is the following, answering this question in the case
of the simplest minimizing hypercones, namely the quadratic
cones. Note that the uniqueness holds for the more general class of minimal hypersurfaces with given boundary, rather than just area minimizers.  
\begin{theorem}\label{thm:main}
  Let $\mathcal{C} = C(S^p\times S^q)$ denote a minimizing quadratic
  cone. There is an $\epsilon > 0$ such that if $\Sigma'$ is the
  spherical graph of a function $g$ over $\Sigma = \mathcal{C}\cap
  \partial B_1$ with $\Vert g\Vert_{C^{2,\alpha}}  <\epsilon$, then
  there is a unique minimal hypersurface in $B_1$ with boundary
  given by $\Sigma'$. 
\end{theorem}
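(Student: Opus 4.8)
My plan is to reduce the theorem to a local statement near $\mathcal C\cap B_1$: a barrier argument based on the Hardt-Simon foliation confines every competitor to a thin neighbourhood of $\mathcal C\cap B_1$, and local uniqueness there is obtained from an implicit function theorem in weighted Hölder spaces adapted to the conical singularity at the origin.

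\textbf{Confinement.} By Hardt-Simon~\cite{SimonHardt}, $\mathbb R^{n+1}\setminus\mathcal C$ is foliated by smooth minimal hypersurfaces $\{S_s\}_{s\in\mathbb R}$ with $S_0=\mathcal C$, each $S_s$ ($s\ne0$) a homothety of one of the two fixed leaves $S_{\pm1}$ and asymptotic to $\mathcal C$. For $|s|$ small, $S_s\cap\partial B_1$ is a spherical graph over $\Sigma$ on the corresponding side, converging to $\Sigma$ as $s\to0$; hence for $\epsilon$ small the boundary $\Sigma'$ is trapped strictly between $S_{-s_0}\cap\partial B_1$ and $S_{s_0}\cap\partial B_1$ for some $s_0=s_0(\epsilon)\to0$. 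Any minimal hypersurface $M$ with $\partial M=\Sigma'$ then lies in the slab $\bigcup_{|s|\le s_0}S_s$: if $M$ reached $S_{s^*}$ with $|s^*|>s_0$, at a first point of contact it would coincide with $S_{s^*}$ by the strong maximum principle for minimal hypersurfaces --- in its varifold form (Ilmanen, Solomon-White, Wickramasekera) to handle a possible singular point of $M$ --- hence $M=S_{s^*}$ by unique continuation, contradicting $\partial M=\Sigma'$. Since the slab collapses onto $\mathcal C$ as $\epsilon\to0$, $M$ is $C^0$-close to $\mathcal C\cap B_1$; with the Schoen-Simon curvature estimates this makes $M$ a $C^{2,\alpha}$-small normal graph over $\mathcal C$ on $B_1\setminus B_\rho$ for every fixed $\rho>0$, and by monotonicity the density of $M$ at the origin is close to that of $\mathcal C$.

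\textbf{Linear analysis and moduli space.} The link $\Sigma=S^p(a)\times S^q(b)$, $a^2=\tfrac{p}{p+q}$, $b^2=\tfrac{q}{p+q}$, is minimal in $S^n$ with $n=p+q+1$ and constant $|A_\Sigma|^2=n-1$, so the Jacobi operator of $\mathcal C$ is $L=\partial_r^2+\tfrac{n-1}{r}\partial_r+\tfrac{1}{r^2}\bigl(\Delta_\Sigma+(n-1)\bigr)$ and the eigenvalues of $-\Delta_\Sigma-(n-1)$ are $\tfrac{j(j+p-1)}{a^2}+\tfrac{k(k+q-1)}{b^2}-(n-1)$ ($j,k\ge0$): namely $-(n-1)$ (constants), $0$ with multiplicity $n+1$ (spanned by the translational Jacobi fields $\langle T,\nu\rangle|_\Sigma$), and eigenvalues $\ge n-1$ otherwise. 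Hence $\mathcal C$ is strictly stable precisely when $-(n-1)>-\tfrac{(n-2)^2}{4}$, i.e.\ $p+q\ge6$, and the indicial roots of $L$ are $\gamma_0:=\tfrac{-(n-2)+\sqrt{n^2-8n+8}}{2}\in\bigl(-\tfrac{n-2}{2},0\bigr)$ (carrying the Hardt-Simon deformation), $0$ (the translations, which integrate to the cones $\mathcal C+T$, so $\mathcal C$ is integrable through Jacobi fields), and roots $\ge1$ otherwise; in particular there are no indicial roots in $(0,1)$. Fixing $\delta\in(0,1)$, I would write a candidate $M$ as a normal graph of $w\in C^{2,\alpha}_\delta$ over the model hypersurface $\mathcal C_{t,T}:=S_t+T$ (with $\mathcal C_{0,0}=\mathcal C$), with boundary data prescribed by $g$. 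Linearising at $(t,T,w)=(0,0,0)$ and using that $\partial_t\mathcal C_{t,0}|_0$ and $\partial_T\mathcal C_{0,T}|_0$ are Jacobi fields, a mode-by-mode computation --- higher modes absorbed by $w$, the zero mode by the parameter $t$, the mode-$1$ eigenspace by the parameter $T$ (the map $T\mapsto\langle T,\nu\rangle|_\Sigma$ being injective), the roots below $-\tfrac{n-2}{2}$ killed by the weight, and injectivity following from the strict-stability energy estimate $\int_{\mathcal C\cap B_1}\bigl(|\nabla u|^2-|A|^2u^2\bigr)\ge\bigl(\tfrac{(n-2)^2}{4}-(n-1)\bigr)\int u^2r^{-2}$ for $u|_{\partial B_1}=0$ --- shows the linearised operator is an isomorphism. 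The implicit function theorem then yields, for $\|g\|_{C^{2,\alpha}}<\epsilon$, a unique small $(t,T,w)$, i.e.\ a family $\mathcal F$ of minimal hypersurfaces near $\mathcal C\cap B_1$ in bijection with boundary data near $\Sigma$, locally unique in the weighted topology.

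\textbf{Matching and the main obstacle.} It remains to show the competitor $M$ from the confinement step lies in $\mathcal F$ with small parameters, which by the local uniqueness identifies it as the only minimal hypersurface with boundary $\Sigma'$. Away from the origin this is the $C^{2,\alpha}$-closeness already established. Near the origin, the density bound forces the tangent cone of $M$ at $0$ to equal $\mathcal C$ (uniqueness of tangent cones at a strictly minimizing cone), so $M\cap B_\rho$ is either smooth or has a single singular point with tangent cone $\mathcal C$. The crux --- and the step I expect to be the main obstacle --- is to upgrade this to the weighted bound: via a Simon-type asymptotic analysis at the cone point, using the explicit indicial data and the integrability just established (in the spirit of Simon and Caffarelli-Hardt-Simon, with no logarithmic or obstruction terms since $\mathcal C$ is strictly stable and integrable), $M$ should be, after a small translation $T$, a normal graph over $\mathcal C_{t,T}$ of a $C^{2,\alpha}_\delta$-function with small norm and some small $t$ ($t\ne0$ exactly when $M$ is smooth at the origin). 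The whole argument thus hinges on the interplay between the global barrier confinement and the fine asymptotics of a near-cone minimal hypersurface at its singular point; the restriction to quadratic cones enters precisely because for them the Jacobi spectrum, the indicial roots, and the integrability are all explicit.
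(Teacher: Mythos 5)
Your proposal is a genuinely different strategy from the paper's. You argue forward --- construct the family of solutions via an implicit function theorem in weighted spaces, then argue that any competitor lies in the family --- whereas the paper argues by contradiction: assume two distinct minimal hypersurfaces with boundary data $g_i\to 0$ exist for all $i$, rescale around the ``most conical'' center, and extract from a compactness/blow-up argument either a non-trivial Jacobi field on $\mathcal C\cap B_1$ vanishing on $\Sigma$, a non-trivial Jacobi field on a Hardt--Simon foliate decaying at rate $|x|^{(2-n)/2}$, or two distinct complete surfaces asymptotic to $\mathcal C$ at rate $|x|^{(2-n)/2}$, each of which is ruled out (Lemma~\ref{prelim:GammaBound}, Simon--Solomon classification, strong maximum principle). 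The logical dependence is also reversed: the paper derives continuity of the Plateau solutions as a \emph{consequence} of uniqueness (Theorem~\ref{main:maintheo}) and then runs your confinement/sliding argument to remove the mass bound, whereas you want to obtain uniqueness as a consequence of a continuous family supplied by the IFT. Your confinement step is essentially the paper's final barrier argument, and your eigenvalue/indicial-root computation and strict-stability bound are correct.

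However, there are two substantive gaps. First, the IFT framework as you set it up does not make sense at the degenerate parameter $t=0$. The family $t\mapsto S_t$ changes topology (smooth surface for $t\ne 0$, cone for $t=0$); the formal ``Jacobi field $\partial_t\mathcal C_{t,0}|_0$'' is the mode $r^{\gamma_0}\phi_1$ with $\gamma_0<0$, which blows up at the origin and is not in $C^{2,\alpha}_\delta$ for any $\delta\ge 0$, so there is no smooth map $(t,T,w)\mapsto \mathcal M(\cdot)$ on a product neighbourhood to which the IFT applies. One must instead perform a CHS-type gluing with a $t$-dependent weight scale $\sim\max(r,|t|)$, and it is exactly this construction that the paper's introduction reports was carried out in \cite{nandakumaran2024minimalsurfacesnearhardtsimon} but \emph{without} being able to extract the continuity in $t$ that your argument needs; your statement that the IFT ``yields\ldots a family $\mathcal F$\ldots in bijection with boundary data'' presupposes precisely what could not be proved there. (Relatedly, $C^{2,\alpha}_\delta$ with $\delta\in(0,1)$ allows graph functions with $u/r\to\infty$ at the origin, which is not compatible with graphicality over $\mathcal C$; the natural weight is $1$, as in the paper.) Second, the matching step, which you correctly flag as the main obstacle, is left at the level of ``Simon-type asymptotic analysis should give the weighted bound.'' This is the entire content of Theorem~\ref{main:maintheo}. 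The paper's proof of it does not establish fine asymptotics of a \emph{single} hypersurface at its singular point (which for general stationary integral varifolds with only a density bound is not available off the shelf); it instead compares \emph{two} hypersurfaces via the three-annulus lemma, rescales at the scale where their separation starts growing at rate $r^{(2-n)/2}$, and applies Allard compactness together with Simon--Solomon's classification to identify the limits as translated/rotated Hardt--Simon leaves. That mechanism is the crucial new idea and has no counterpart in your proposal.
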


Here, by a minimal hypersurface we mean a codimension one stationary varifold (see Section~\ref{sec:prelim} for more details).
A natural approach to proving such a result would be to show that for
given $g$ as above, we can construct a minimal hypersurface $M_g$, which
moreover varies continuously as we vary $g$. Then one could hope that
the minimal hypersurfaces $M_{g + t}$ define a foliation as $t\in
(-\epsilon, \epsilon)$, and one could use these to show that $M_g$ is
a unique solution to the corresponding Plateau problem. A first step
towards this was achieved by the first author~\cite{nandakumaran2024minimalsurfacesnearhardtsimon},
constructing suitable $M_g$ using a gluing procedure that generalized
Caffarelli-Hardt-Simon's work~\cite{CHS}. Unfortunately it was not
clear from the construction whether the resulting hypersurfaces vary
continuously. Note, however, that using Theorem 1.1 we know that the minimal hypersurfaces constructed in \cite{nandakumaran2024minimalsurfacesnearhardtsimon} are the unique minimal hypersurfaces in $B_1$ with given boundary values close to $\Sigma$. This shows that up to a small translation and rotation, every such minimal hypersurface can be realized as a graph over $\mathcal{C}$, or over a scaling of the Hardt-Simon smoothings (see Theorem~\ref{thm:HardtSimon}). This gives a different approach to the result of Edelen-Spolaor~\cite{Nick}. 

In this paper, we pursue a different approach, arguing by
contradiction. First, we will restrict our attention to minimal hypersurfaces $M$ as in Theorem~\ref{thm:main}, with the additional condition that the total mass $\Vert M\Vert \leq \frac{3}{2}\Vert \mathcal{C}\cap B_1\Vert$, in order to rule out possible higher multiplicity. Then we show that if we had sequences $M^i_1,
M^i_2$ of minimal hypersurfaces in $B_1$ with this mass bound, whose boundaries were the graphs of $g_i$ over $\Sigma$ with $g_i \to 0$ in $C^{2,\alpha}$,  then either $M^i_1 = M^i_2$ for all sufficiently large
$i$, or we could extract one of the following:
\begin{itemize}
  \item A non-zero Jacobi field $v$ on $\mathcal{C}\cap B_1$ which vanishes
    along $\mathcal{C}\cap \partial B_1$, and in addition $|v(x)| =
    O(|x|^{\frac{2-n}{2}})$ as $|x|\to 0$.
  \item A non-zero Jacobi field $v$ on the Hardt-Simon foliate $S$
    such that $|v(x)| = O(|x|^{\frac{2-n}{2}})$ as $|x| \to \infty$.
  \item Two complete minimal hypersurfaces $S_1, S_2$ asymptotic to
    $\mathcal{C}$ at infinity, such that $S_2$ approaches $S_1$ at a
    rate faster than $|x|^{\frac{2-n}{2}}$ as $|x|\to\infty$.
\end{itemize}
Each of these three possibilities leads to a contradiction, which in
turn leads to the uniqueness statement of Theorem~\ref{thm:main}, for minimal hypersurfaces with the above mass bound. In order to remove the assumption of the mass bound, we will argue as outlined above with a family $M_{g+t}$ of area minimizers with boundary values given by $g+t$. The area minimizers satisfy the mass bound, and the uniqueness statement then implies that the $M_{g+t}$ vary continuously with $t$. See Section~\ref{sec:mainproof} for details.

It is an interesting and difficult problem to extend our result to
other types of minimal hypercones. In essence the crucial property of
quadratic cones that we use is that by Simon-Solomon~\cite{SimonSolomon} we
have a classification of all minimal hypersurfaces asymptotic to
quadratic cones. Note that Chan~\cite{Chan97} studied this kind of
classification problem for more general minimizing hypercones, and a
basic difficulty is a uniqueness question related to our work (see
\cite[Remark (1) on p. 180]{Chan97}).

\subsection*{Acknowledgements}
We would like to thank Nick Edelen and Zhihan Wang for helpful discussions. G. Sz. was supported in part by NSF grant DMS-2506325.

\section{Notation and Preliminaries}\label{sec:prelim}

\subsection{Varifolds} \label{sec:varifolds}
In this section we briefly recall some notation that we will use regarding varifolds. For a more detailed treatment, see Simon \cite{simon1984lectures}, Federer-Fleming \cite{federer2014geometric}, De Lellis \cite{DeL12}.

Let $U\subset \mathbb R^{n+1}$ be an open set. An integral varifold $V$ of dimension $n$ in $U$ is a pair $(N, \theta)$ such that  $N\subset U$ is an $n$-rectifiable set and $\theta:N\rightarrow \mathbb N\backslash \{0\} $ (multiplicity function) a Borel map. We associate a measure $\mu_N$ to $N$, defined on any Borel set $A\subset\mathbb R^{n+1}$ by
$$ \mu_N(A)=\int_{N\cap A} \theta d\mathcal H^n.$$\\
The varifold $N$ is said to be stationary if the first variation is zero. The density of the varifold in an $r$-ball is given by 
$$ \Theta_N(r, x)\coloneqq \frac{\mu_N( B_r(x))}{w_n r^n},$$
where $w_n$ is the volume of the unit $n$-ball. If $N$ is stationary, the monotonicity formula states that $r\rightarrow\Theta_N(r, x)$ is non-decreasing as a function of $r$. 

In general a stationary varifold can have singularities, and higher multiplicity, but with our mass bound in Theorem \ref{main:maintheo}, we will see that the varifold $N$ is actually a (multiplicity one) smooth submanifold in the annulus $B_1\setminus B_{1/2}$ (see Proposition~\ref{prop:FromAllard}). 
More precisely, by a minimal hypersurface in $B_1$, with boundary $\Sigma'$ we mean a stationary integral $n$-varifold $N$ in $B_1$ satisfying:
\begin{enumerate}
\item Writing $\mathrm{spt}\, N$ for the support of $\mu_N$, we have $\overline{\mathrm{spt}\, N} \cap \partial B_1 = \Sigma'$,
\item For any $p\in \Sigma'$ the varifold $N$ has density $\frac{1}{2}$ at $p$ in the sense that
\[ \lim_{r\to 0} \frac{\mu_N(B_r(p))}{r^n\omega_n} = \frac{1}{2},\]
where $\omega_n$ is the volume of the unit Euclidean ball in $\mathbb{R}^n$. 
\end{enumerate}

\subsection{Jacobi fields}
Suppose that $N$ is a smooth minimal hypersurface in an open set $U$. For a function $u$ on $N$ that is sufficiently small in $C^{2,\alpha}$, we consider the graph, $graph_N(u)$ of $u$ over $N$. We let $\mathcal{M}_N(u)$ denote the mean curvature of $graph_N(u)$. 
The mean curvature $\mathcal{M}_N(u)$ is a $2^{nd}$ order quasi-linear differential operator, of the form
$$\mathcal{M}_N(u)=a_N(x,u,\nabla u)^{ij}\nabla^2_{ij}u+b_N(x,u,\nabla u).$$
Let $\mathcal{L}_N(u)$ be the linearization of $\mathcal{M}_N$ at $0$, i.e., for $u:N\rightarrow \mathbb R$, 
$$\left. \mathcal{L}_N(u)=\frac{d}{dt}\right |_{t=0} \mathcal{M}_N(tu) .$$  In general, $\mathcal{L}_N$ is given by 
\begin{equation}\label{L_N}
\mathcal{L}_N=\Delta_N+|A_N|^2,
\end{equation}
where $A_N$ is the second fundamental form of $N^n\subset \mathbb R^{n+1}$, and $\Delta_N$ is the Laplacian operator on $N$. Functions in the kernel of $\mathcal{L}_N$ are called Jacobi fields on $N$. We define
$$Q_N(u)\coloneqq \mathcal{M}_N(u)-\mathcal{L}_N(u),$$
which gives the higher order terms of the mean curvature operator. 

\subsection{Quadratic cones}
Area-minimizing hypersurfaces $N^n$ with $n\leq 6$ are smooth by Simons~\cite{SimonJ}, and this inequality is sharp. The Simons cone $C(S^3\times S^3)$ was the first example of an area-minimizing minimal hypersurface with a singularity shown by Bombieri-De Giorgi-Giusti \cite{Bombieri1969}. More generally, quadratic cones are among the simplest area-minimizing cones and will be a key focus of the paper. 
\begin{definition}[Quadratic cones]
\label{prel:defn:Quadratic cones}
For $p, q\geq 1$ we define quadratic cones, also known as generalized Simons cones, as 
$$\mathcal C^{p, q}\coloneqq C(S^p\times S^q)=\{(x, y)\in\mathbb R^{p+1}\times \mathbb R^{q+1}: q|x|^2=p|y|^2\}\subset \mathbb R^{n+1},$$ where $n-1=p+q$. 
\end{definition}
The cone $\mathcal C^{p, q}$ is minimal for all $(p,q)$, and area-minimizing when $p+q>6$ or when $(p,q)=(3,3), (2,4), (4,2)$ (see Sim\~oes~\cite{Simoes74}, Davini~\cite{Davini}). For ease of notation throughout this paper, we will denote $\mathcal{C}\coloneqq \mathcal C^{p,q}$ for one of the area-minimizing quadratic cones. Since we are interested in the unit ball $B_1=\{x\in \mathbb R^{n+1}: |x| <1\}$, denote 
$$\mathcal{C}_1\coloneqq \mathcal{C}\cap B_1, \:\: \Sigma\coloneqq \mathcal C\cap \partial B_1 .$$

\subsection{Jacobi fields on $\mathcal{C}$}
\begin{definition}[Weighted H\"older norm on $\mathcal C$]
Let $u:\mathcal{C}\cap B_a\backslash B_b\rightarrow \mathbb{R}$ be a function on a region in $\mathcal{C}$. The \emph{weighted H\"older norm} of $u$ is 
\begin{equation*}
\|u(.)\|_{C^{k, \alpha}_{\delta}}\coloneqq \sup_{r > 0}\|r^{-\delta}u(r.)\|_{C^{k, \alpha}(\mathcal{C}\cap (B_1\backslash B_{1/2}))}.
\end{equation*}
We say that $u\in C^{k, \alpha}_{\delta}$ if  $\|u\|_{C^{k, \alpha}_{\delta}}$ is finite. We will only use $\delta = 1$, since $C^{2,\alpha}_1$ is the natural space of functions whose graphs over $\mathcal{C}$ can be realized as a surface. 
\end{definition}
We use the same notation for weighted spaces on other hypersurfaces such as $M_j^i$ near $\mathcal C$ were in annular regions where $M_j^i$ is graphical and close to the cone we can define a similar norm (see \cite[Chapter 4]{Marshall} for further discussion on weighted spaces).  

Using the second fundamental form of $\mathcal C$, note
$$\mathcal{L_C}=\Delta_\mathcal C+\frac{(  n-1)}{|x|^2}.  $$
For $x\in \mathcal C \backslash \{0\}$, we can write $x=rw$ where $r\in (0, \infty)$ and $w\in \Sigma$. Writing the above equation in polar coordinates, we get, 
$$\mathcal{L_C}=\frac{\partial^2}{\partial r^2}+\frac{n-1}{r}\frac{\partial }{\partial r}+\frac{1}{r^2}\mathcal L_\Sigma, $$
where $\mathcal L_\Sigma=\Delta_\Sigma+(n-1)$. Decomposing $L^2(\Sigma)$ using an orthonormal basis $\{ \phi_i\}_{i=1}^{\infty}$ and corresponding eigenvalues $\mu_1\leq \mu_2 \leq \dots \rightarrow \infty,$ such that
\begin{equation}\label{eigenvalue}
\mathcal{L}_{\Sigma} \phi_i=-\mu_i \phi_i,
\end{equation}
and, $\langle\phi_i, \phi_j\rangle=\delta_{ij}$.
Given a function $u$ on (a subset of) $\mathcal{C}$, we can write $u$ using the above basis in the form
$$u(r\omega)=\sum_{i=1}^{\infty}a_i(r) \phi_i(\omega). $$
Solving $\mathcal{L_C}u=0$, we get that Jacobi fields on $\mathcal C$ are of the form, 
\begin{equation}\label{eqn: u}
    u(rw)=\sum_{i=1}^\infty\left(c_i^+r^{\gamma_i^+}+ c_i^-r^{\gamma_i^-} \right)\phi_i(w),
\end{equation}
where
$$ \gamma^\pm_i=\frac{2-n\pm \sqrt{(n-2)^2+4\mu_i}}{2}.$$
Using the spectrum of spheres, one can explicitly compute the values of $\gamma_i$'s for quadratic cones (see \cite{SimonSolomon}).  For an area minimizing quadratic cones we define $\gamma=\gamma_1^+$, that satisfies $\frac{2-n}{2}<\gamma < -1$.  For example, for the Simons cone $\mathcal C^{3, 3}$ $\gamma_1^+=-2$.
Also note that $\phi_i$ (see \cite{SimonSolomon}),  $i=1, 2, 3$  geometrically corresponds to Hardt-Simon foliations, translations and rotations. 

\subsection{Hardt-Simon Foliation}
Given the quadratic cone $\mathcal{C}= \mathcal{C}^{p,q}$ with link $\Sigma$, notice that $\mathbb{R}^{n+1} \backslash \mathcal{C}$ divides $\mathbb R^{n+1}$ into two connected regions $E_{+}$, $E_-$, where 
\begin{align*}
E_+ &=\{(x, y)\in \mathbb R^{p+1} \times \mathbb R ^{q+1}: q|x|^2>p|y|^2 \},\\
E_- &=\{(x, y)\in \mathbb R^{p+1} \times \mathbb R ^{q+1}: q|x|^2<p|y|^2 \}.
\end{align*}
Choose an orientation on $\mathcal{C}$ such that the normal $\nu_{\mathcal C}(x,y)$ points in $E_+$. Hardt-Simon showed the following.

\begin{theorem}[\cite{SimonHardt}] \label{thm:HardtSimon}
For an area-minimizing quadratic cone $\mathcal C=\mathcal C^{p,q}$, there exists a connected oriented smooth minimal hypersurface $S_+\subset E_+$. This surface is unique up to scaling, that is if $S\subset E_+$ is a connected smooth minimal hypersurface, then $S=\lambda S_+$ for some $\lambda>0$.  Moreover the hypersurfaces $\lambda S_\pm$ for $\lambda > 0$ foliate $E_\pm$.
\end{theorem}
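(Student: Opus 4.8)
The plan is to exploit the symmetry group $G := O(p{+}1)\times O(q{+}1)$ preserving $\mathcal C^{p,q}$: first produce and classify all $G$-invariant minimal hypersurfaces by reducing to a planar ODE, and then upgrade this to the stated uniqueness for \emph{all} minimal hypersurfaces in $E_+$ via the strong maximum principle. (Alternatively one could run Hardt--Simon's original variational construction, valid for every strictly minimizing cone; the ODE is cleaner in the quadratic case.) A $G$-invariant hypersurface is swept out by a curve $\Gamma$ in the closed quarter-plane $Q=\{(s,t):s,t\ge 0\}$, $s=|x|$, $t=|y|$, and its $n$-dimensional area equals $\mathrm{vol}(S^p)\,\mathrm{vol}(S^q)\int_\Gamma s^pt^q\,d\ell$; such a hypersurface is minimal exactly when $\Gamma$ is a geodesic of the conformally flat metric $(s^pt^q)^2(ds^2+dt^2)$ on the interior of $Q$, with $\mathcal C$ corresponding to the ray $\{t=\sqrt{q/p}\,s\}$. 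In coordinates $\sigma=\log r$, $\psi=\arctan(t/s)\in[0,\tfrac\pi2]$ — chosen so that the dilation $x\mapsto\lambda x$ acts by $\sigma\mapsto\sigma+\log\lambda$ — the geodesic equation for a graph $\psi=\psi(\sigma)$ is an autonomous second order ODE, and $\mathcal C$ is its rest point $\psi\equiv\psi_0:=\arctan\sqrt{q/p}$.

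The next step is the phase-plane analysis in the variables $(\psi,\psi')$. The only interior critical point is $(\psi_0,0)$, and its linearization has two distinct negative real eigenvalues (namely $\gamma_1^{+}-1$ and $\gamma_1^{-}-1$, with $\gamma_1^\pm$ as in \eqref{eqn: u}); this is exactly where the hypothesis that $\mathcal C^{p,q}$ is area-minimizing enters — in particular $n\ge 7$ — and it makes $(\psi_0,0)$ a stable node rather than a spiral. Hence a trajectory entering a small neighbourhood of $(\psi_0,0)$ on the side $\psi<\psi_0$ converges to it monotonically as $\sigma\to+\infty$, so the corresponding hypersurface lies in $E_+$ and is asymptotic to $\mathcal C$; traced backwards, it reaches the edge $\{\psi=0\}$ at a finite $s=s_0>0$ and meets it orthogonally. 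Orthogonality is a single scalar condition which, together with the dilation symmetry, picks out a unique orbit, hence a unique curve $\Gamma_+$ up to dilation; its surface of revolution $S_+$ is a smooth complete minimal hypersurface in $E_+$ (smoothness at the cap $s_0S^p\times\{0\}$ being precisely the orthogonality), and the analogous trajectory on the side $\psi>\psi_0$ yields $S_-$. Finally, distinct dilates $\lambda_1\Gamma_+$ and $\lambda_2\Gamma_+$ are disjoint (the radius $r$ is strictly increasing along $\Gamma_+$, so each ray from the origin meets it once), and as $\lambda$ ranges over $(0,\infty)$ the dilates $\lambda\Gamma_+$ fill the sector $0\le\psi<\psi_0$; therefore $\{\lambda S_+\}_{\lambda>0}$ and $\{\lambda S_-\}_{\lambda>0}$ foliate $E_+$ and $E_-$.

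It remains to show that an arbitrary connected smooth minimal hypersurface $S\subset E_+$, not assumed $G$-invariant, coincides with one of the leaves $\lambda S_+$. By the monotonicity formula a blow-up of $S$ at infinity is a minimal hypercone contained in $\overline{E_+}$, and since $\mathcal C$ is the only such cone (with multiplicity one), $S$ is properly embedded, asymptotic to $\mathcal C$, and separates $E_+$. One then slides the leaves: for $\lambda$ small $\lambda S_+$ lies, away from the origin, in an arbitrarily small neighbourhood of $\mathcal C$, and for $\lambda$ large it is pushed to the far side of $S$, so there is a first parameter $\lambda^{\ast}$ at which $\lambda^{\ast}S_+$ touches $\overline S$; if this contact occurs at an interior point, the strong maximum principle and connectedness force $S=\lambda^{\ast}S_+$. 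The only remaining possibility is that $\lambda^{\ast}S_+$ and $S$ both approach $\mathcal C$ at infinity, on the same side and at the same rate, without ever meeting — and excluding this is a unique-continuation-at-infinity statement for minimal graphs over $\mathcal C$, which is the real difficulty. For quadratic cones it is exactly what the Simon--Solomon classification~\cite{SimonSolomon} supplies (and it is built into Hardt--Simon's own asymptotic estimates), whereas for general minimizing cones it is open (cf.\ Chan's remark~\cite{Chan97}); invoking it completes the proof. I expect this last dichotomy to be the main obstacle: the other steps are robust ODE and maximum-principle arguments, while ruling out two distinct minimal hypersurfaces asymptotic to $\mathcal C$ at the same rate is special to quadratic cones — and is precisely why the present paper's main theorem is restricted to them.
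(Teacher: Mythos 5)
The paper does not prove this statement; it is cited directly from Hardt--Simon. Your proposal is therefore a genuine alternative proof, and in the quadratic case it is essentially the classical one going back to Bombieri--De Giorgi--Giusti and Alencar: reduce under $O(p{+}1)\times O(q{+}1)$-symmetry to a geodesic problem for the metric $(s^pt^q)^2(ds^2+dt^2)$ on the quarter plane, do the phase-plane analysis, and then promote to general hypersurfaces by a sliding/maximum-principle argument. Your identification of the linearized eigenvalues as $\gamma_1^{\pm}-1$ is correct (for $\mathcal{C}^{3,3}$ these are $-3,-4$), the area-minimizing hypothesis is exactly what forces them to be real and distinct so that the rest point is a node, and you rightly flag the delicate point in the last step. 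A few remarks on where this route differs from and relies on more than Hardt--Simon's original argument. First, Hardt--Simon's proof is purely variational and works for \emph{every} strictly minimizing cone with isolated singularity (not just rotationally symmetric ones): they produce $S_\pm$ as one-sided perturbations of area minimizers, prove the graphical asymptotics $\pm r^\gamma+O(r^{\gamma-\epsilon})$ from strict minimizing, and deduce the foliation and uniqueness from these estimates together with the maximum principle --- no ODE and no appeal to a later classification. Your ODE route is cleaner but only available in the symmetric case. Second, in the phase-plane step you should justify that the unstable manifold of the node, traced backwards, actually reaches $\{\psi=0\}$ orthogonally and that $\psi$ is monotone along the orbit (needed for both the disjointness and the covering of the sector by the dilates); this is true but requires an explicit computation or a Lyapunov-type argument on the reduced ODE, not just the local node structure. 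Third, your final appeal to Simon--Solomon to exclude two distinct hypersurfaces asymptotic to $\mathcal C$ at the same rate is not circular --- Simon--Solomon's proof builds on the ODE picture plus a substantial asymptotic-symmetry argument, not on Hardt--Simon's uniqueness --- but it is worth noting that the theorem as attributed to Hardt--Simon is proved by them without Simon--Solomon, using instead the strict-minimizing decay estimate to show that two leaves which agree at infinity must coincide. So your proof is correct, but the cleaner and historically accurate citation for the $S_\pm$ existence/foliation is Hardt--Simon alone, with the full rigidity among all minimal hypersurfaces asymptotic to $\mathcal C$ being the separate Simon--Solomon input (which the paper records as Theorem~\ref{Theo: Simon-Solomon}).
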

Hardt-Simon had proven this more generally for regular area-minimizing cones, and recently the existence (but not the uniqueness) of $S_+$ was shown for general area-minimizing cones by Wang {\cite{Wang24}}.

We define
$$ S_{\lambda}=\left\{\begin{array}{ccc}
      \lambda S_+ \: &\lambda>0, \\
      \mathcal{C} \: &\lambda=0, \\
      |\lambda| S_- \: &\lambda<0.
 \end{array} \right. $$

Since minimizing quadratic cones are strictly minimizing in the sense of Hardt-Simon~\cite{SimonHardt}, see in particular loc. cit. Remark 3.3 (1), the surfaces $S_{\pm}$ correspond to the Jacobi field $r^{\gamma}$, i.e, $S_{\pm}$ are graphical over $\mathcal C$ outside of a ball $B_{R_0}$ for some $R_0>0$:
$$ S_{\pm} \backslash B_{R_0}=\mbox{graph}_{\mathcal{C}}\{ \pm r^\gamma+O(r^{\gamma -\epsilon})\},$$
where we have normalized $S_{\pm}$ such that the coefficient of $r^\gamma$ is $\pm 1$. By scaling, we can use this to see that, for $\lambda>0$ we have
\begin{equation}\label{graphS_lambda}
S_{\pm \lambda}\backslash B_{\lambda R_0}=\mbox{graph}_{\mathcal{C}}\{ \pm \lambda^{-\gamma +1}  r^{\gamma}+O(r^{\gamma -\epsilon})\}.
\end{equation}

We will also need the following global rigidity result due to Simon-Solomon.
\begin{theorem}[\cite{SimonSolomon}, \cite{Mazet_2017}]\label{Theo: Simon-Solomon}
Let $N$ be a minimal hypersurface in $\mathbb R^{n+1}$ with tangent cone at infinity given by the quadratic cone  $\mathcal{C}$. Then, up to translations and rescalings, $N=\mathcal{C}$, $S_+$, or $S_-$.   
\end{theorem}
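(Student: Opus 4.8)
The plan is to combine an asymptotic analysis of $N$ near infinity with a maximum-principle sweep-out by the Hardt--Simon foliation $\{S_\lambda\}_{\lambda\in\mathbb R}$, which by Theorem~\ref{thm:HardtSimon} together with $S_0=\mathcal C$ foliates all of $\mathbb R^{n+1}$. First, $N$ is connected with a single end: by stationarity $\int_N\Delta_N|x|^2\,d\mu=0$ while $\Delta_N|x|^2\equiv 2n>0$, so $N$ has no compact component, and since the tangent cone at infinity is the multiplicity-one cone $\mathcal C$ there is exactly one end. Next, since $\mathcal C$ is smooth away from the origin and the blow-downs $R^{-1}N$ converge as $R\to\infty$ to $\mathcal C$ with multiplicity one --- using uniqueness of the tangent cone at infinity, which for quadratic cones follows from a Lojasiewicz--Simon inequality, or which may be taken as part of the hypothesis --- Allard's theorem applied at large scales gives $R_0$ with $N\setminus B_{R_0}=\mathrm{graph}_{\mathcal C}(u)$, where $|x|^{-1}|u|+|\nabla u|+|x||\nabla^2 u|\to 0$ as $|x|\to\infty$; in particular $u=o(r)$ in the scale-invariant sense, and $u$ solves the minimal surface equation over $\mathcal C$, i.e.\ $\mathcal L_{\mathcal C}u=Q_{\mathcal C}(u)$, with $Q_{\mathcal C}$ collecting the quadratically small higher-order terms.

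\textbf{Leading asymptotics.} Writing $u(r\omega)=\sum_i a_i(r)\phi_i(\omega)$ and analysing the ODEs $a_i''+\tfrac{n-1}{r}a_i'-\tfrac{\mu_i}{r^2}a_i=\langle Q_{\mathcal C}(u),\phi_i\rangle$ by variation of parameters against the homogeneous solutions $r^{\gamma_i^\pm}$, one produces an asymptotic expansion for $u$. The decisive input is the spectrum of $\Sigma=S^p\times S^q$ (cf.\ \cite{SimonSolomon}): the only indicial root of $\mathcal L_{\mathcal C}$ in the open interval $(\gamma,1)$ is $0$ (translations), the root $1$ comes from rotations, and the next root below $\gamma=\gamma_1^+$ is $\gamma_1^-=2-n-\gamma<\gamma$, with $\phi_1$ the radially symmetric eigenfunction. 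Hence $u=o(r)$ annihilates all modes of positive homogeneity, the translational $O(1)$ contribution is removed by replacing $N$ with $N-e$ for a suitable $e\in\mathbb R^{n+1}$, and after iterating the ODE analysis one obtains
$$u(r\omega)=c\,r^{\gamma}\phi_1(\omega)+o(r^{\gamma})\qquad\text{for some }c\in\mathbb R .$$
Comparing with \eqref{graphS_lambda}, when $c\neq 0$ there is a unique $\lambda\neq 0$ with $S_\lambda=\mathrm{graph}_{\mathcal C}(u_\lambda)$ and $u_\lambda=c\,r^{\gamma}\phi_1+o(r^{\gamma})$, so that $u-u_\lambda=o(r^{\gamma})$.

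\textbf{Sweep-out.} Let $\Lambda\colon\mathbb R^{n+1}\to\mathbb R$ be the continuous function, smooth off $\mathcal C$, with $x\in S_{\Lambda(x)}$. Since the leaves separate from $\mathcal C$ and from one another at the precise rate $\sim r^{\gamma}$, the previous step gives $\Lambda|_N\to c_\ast$ at infinity, where $c_\ast=0$ if $c=0$ and $c_\ast=\lambda$ if $c\neq 0$. As $\mathrm{spt}\,N$ meets each ball in a compact set and $\Lambda|_N$ has limit $c_\ast$ at infinity, $m:=\sup_N\Lambda$ is either equal to $c_\ast$ or attained at an interior point. If $m>c_\ast$, then at a point realizing it $N$ touches the smooth minimal hypersurface $S_m$ and lies in $\{\Lambda\le m\}$, one side of $S_m$; by the strong maximum principle for minimal hypersurfaces --- in the form valid for stationary integral varifolds, which also handles touching at a singular point of $N$ or at the vertex of $\mathcal C$ --- this forces $N=S_m$, contradicting the asymptotics just established since $m\ne c_\ast$. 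The symmetric argument shows $\inf_N\Lambda<c_\ast$ is likewise impossible, so $\Lambda|_N\equiv c_\ast$, i.e.\ $\mathrm{spt}\,N\subseteq S_{c_\ast}$, and the constancy theorem together with the multiplicity one at infinity gives $N=S_{c_\ast}$. Recalling the translation by $e$ from the previous step, $N$ is, up to a translation and rescaling, one of $\mathcal C$, $S_+$, or $S_-$, as claimed.

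\textbf{Main obstacle.} The crux is the asymptotic step: ensuring that $u$ genuinely has a clean leading term $c\,r^{\gamma}\phi_1$ --- with no oscillation or logarithmic correction --- and that the nonlinear error $Q_{\mathcal C}(u)$, which only becomes controllably small once some decay of $u$ is already known, does not spoil the linear picture. This forces a bootstrap: first upgrade $u=o(r)$ to a definite power rate (for instance by using $\pm\epsilon\,r^{\gamma}$, coming from the foliation, as barriers, or by a density/compactness argument at infinity), then feed this into the eigenmode ODEs with $Q_{\mathcal C}(u)$ absorbed into the inhomogeneity and iterate, at each step picking up the next indicial root. A secondary technical point is invoking the appropriate strong maximum principle in the sweep-out so as to handle possible singularities of the stationary varifold $N$.
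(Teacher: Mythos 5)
The paper does not prove Theorem~\ref{Theo: Simon-Solomon}: it is quoted directly from Simon--Solomon and Mazet, so there is no in-paper argument to compare against. Judged on its own terms, your sketch correctly reproduces the Simon--Solomon line of attack: (i) use Allard's theorem, together with uniqueness of the tangent cone at infinity (valid here since $\mathcal C$ has integrable link), to write $N\setminus B_{R_0}=\mathrm{graph}_{\mathcal C}(u)$ with $u=o(r)$; (ii) run a Simon-style eigenmode expansion and use the spectral fact that the only indicial roots of $\mathcal L_{\mathcal C}$ in $[\gamma,1]$ are $1$ (rotations), $0$ (translations) and $\gamma$ (the Hardt--Simon mode) to reduce, after translating, to $u=c\,r^\gamma\phi_1+o(r^\gamma)$; and (iii) sweep out with the Hardt--Simon foliation $\{S_\lambda\}$ and the strong maximum principle to force $N=S_{c_*}$.

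You flag the genuine technical points yourself, and correctly: the bootstrap needed to promote $u=o(r)$ to an actual power-law expansion while absorbing the quadratic error $Q_{\mathcal C}(u)$ and ruling out logarithmic corrections, and the need for a varifold strong maximum principle (Ilmanen, Wickramasekera) if $N$ is allowed to be singular or if the touching happens at the vertex $0\in\mathcal C$. Two smaller points worth tightening if this were to be made rigorous: the boundedness of $\Lambda\vert_N$, which is needed before one can speak of $\sup_N\Lambda$, only holds \emph{after} the translation by $e$ has removed the $O(1)$ mode -- before that $\Lambda\vert_N$ can diverge; and the claim that there are no indicial roots in $(\gamma,1)$ other than $0$ is the essential computational input from \cite{SimonSolomon} and should be cited as such rather than asserted. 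With those caveats, the sketch is sound and is the same strategy as the references the paper invokes.
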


The fact that quadratic cones are strictly stable and strictly minimizing implies the following. 
\begin{lemma} \label{prelim:GammaBound}
We have the following, 
\begin{enumerate}
    \item Let $w$ be a Jacobi field on $S_\lambda$ for $\lambda\in \mathbb R\setminus\{0\}$ such that 
$$ |w(x)|<C|x|^{\frac{2-n}{2}}$$
for $|x| > 1$.  Then $w=0$.

\item Let $w$ be a Jacobi field on $\mathcal C_1 = \mathcal{C} \cap B_1$ with $w= 0$ on $\Sigma$ and $|w(x)|<C|x|^{\frac{2-n}{2}}$ for $|x| < 1$. Then $w=0$  on $\mathcal C_1$.
\end{enumerate}

\end{lemma}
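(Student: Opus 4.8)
The plan is to treat both parts by the same device: divide the putative Jacobi field by a positive Jacobi field, run a logarithmic cut-off (Caccioppoli-type) estimate, and exploit that the assumed rate $|x|^{(2-n)/2}$ is precisely the borderline rate at which this closes. Strict stability of $\mathcal{C}$ enters only through the fact that the indicial roots straddle $\tfrac{2-n}{2}$ strictly, i.e. $\gamma=\gamma_1^+>\tfrac{2-n}{2}>\gamma_1^-$, while strict minimality enters only through the $|x|^\gamma$-asymptotics of the Hardt--Simon smoothings $S_\pm$ (hence of their lapse functions).

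For part (2), note first that $u_0:=|x|^\gamma$ is a positive Jacobi field on $\mathcal{C}\setminus\{0\}$: being radial, $\mathcal{L}_{\mathcal C}u_0=\bigl(\gamma^2+(n-2)\gamma+(n-1)\bigr)|x|^{\gamma-2}$, which vanishes since $\gamma$ is the indicial root attached to the constant first eigenfunction $\phi_1$ (so $\mu_1=1-n$). Writing $w=u_0 v$, the equation $\mathcal{L}_{\mathcal C}w=0$ becomes $\operatorname{div}(u_0^2\nabla v)=0$ on $\mathcal{C}_1\setminus\{0\}$, and $v=w/u_0$ vanishes on $\Sigma$ because $w$ does. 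Testing against $v\eta^2$, where $\eta$ equals $0$ on $B_\rho$, equals $1$ off $B_{\sqrt\rho}$, and is interpolated logarithmically in $\log|x|$ in between (so $|\nabla\eta|\lesssim(|x|\log\tfrac1\rho)^{-1}$), and integrating by parts — the $\Sigma$-boundary term vanishes since $v|_\Sigma=0$, and $\eta$ kills the origin — yields $\int_{\mathcal C_1}u_0^2\eta^2|\nabla v|^2\le 4\int_{\mathcal C_1}u_0^2v^2|\nabla\eta|^2=4\int_{\mathcal C_1}w^2|\nabla\eta|^2$. Since $w^2\le C|x|^{2-n}$ and $\mathcal C$ is a cone, $\int_{(B_{\sqrt\rho}\setminus B_\rho)\cap\mathcal C}|x|^{-n}\,d\mathcal H^n\lesssim\log\tfrac1\rho$, so the right-hand side is $O\bigl((\log\tfrac1\rho)^{-1}\bigr)\to0$. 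Letting $\rho\to0$ (monotone convergence, $\eta\nearrow1$) gives $\nabla v\equiv0$, so $v$ is constant; as $v|_\Sigma=0$ this forces $w\equiv0$.

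For part (1), by rescaling we may assume $\lambda=1$: the Jacobi equation and the bound $|w|\le C|x|^{(2-n)/2}$ are covariant under the dilations defining the $S_\lambda$, and for $\lambda<0$ one works with $S_-$. The role of $u_0$ is now played by the lapse of the Hardt--Simon foliation $\{S_\mu\}_{\mu>0}$ at $\mu=1$: differentiating the expansion \eqref{graphS_lambda} in $\mu$ at $\mu=1$ identifies it as a Jacobi field on $S:=S_+$ asymptotic to $(1-\gamma)|x|^\gamma$, and it is positive everywhere since $\{S_\mu\}$ genuinely foliates $E_+$ by Theorem~\ref{thm:HardtSimon}. As $S$ is complete and boundaryless, the same substitution $w=u_0v$ and a logarithmic cut-off at infinity ($\eta=1$ on $B_R$, $\eta=0$ off $B_{R^2}$) give, letting $R\to\infty$, $\int_S u_0^2|\nabla v|^2=0$, using once more $\int_{(B_{R^2}\setminus B_R)\cap S}|x|^{-n}\,d\mathcal H^n\lesssim\log R$ together with $w^2\le C|x|^{2-n}$. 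Hence $w=cu_0$ for a constant $c$; but $u_0\ge c'|x|^\gamma$ near infinity with $\gamma>\tfrac{2-n}{2}$, which is incompatible with $|w|\le C|x|^{(2-n)/2}$ unless $c=0$, so $w\equiv0$.

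The step I expect to be the main obstacle is the sharpness of the cut-off: since $|x|^{(2-n)/2}$ is exactly the $L^2$-critical weight for $\mathcal C$, a cut-off on a fixed dyadic annulus does not close the estimate and one must spread it logarithmically; the argument only just works, as the annular integral of $|x|^{-n}$ is log-divergent, precisely cancelling the $(\log)^{-2}$ coming from $|\nabla\eta|^2$. The more technical points are verifying positivity and the $|x|^\gamma$-asymptotics of the Hardt--Simon lapse on $S_\pm$, and checking that $w$ is regular enough away from $0$ and $\Sigma$ for the integrations by parts to be legitimate — both standard, but worth stating carefully.
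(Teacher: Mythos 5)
Your proof is correct, and it takes a genuinely different route from the paper. The paper treats the two parts by two separate methods: for part (2) it uses the Fourier expansion \eqref{eqn: u} directly, observing that the Dirichlet condition on $\Sigma$ forces $c_i^+ = -c_i^-$ while the interior bound $|w|\le C|x|^{(2-n)/2}$ kills every $c_i^-$ (since $\gamma_i^-<\frac{2-n}{2}$ for all $i$), so $w\equiv 0$; for part (1) it uses the same positive Jacobi field $w_0$ coming from the Hardt--Simon foliation, but runs a sliding/strong-maximum-principle comparison: the ratio $w/w_0$ attains its supremum $c_+$ at a finite point (since $w$ decays strictly faster than $w_0$), and equality at an interior point forces $w\equiv c_+w_0$, contradicting the decay unless $c_+=0$, with a symmetric argument from below. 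Your proof instead unifies both parts under a single Caccioppoli estimate with a logarithmic cut-off, dividing by the explicit positive supersolution $u_0=|x|^\gamma$ on the cone and by the (positive, $\sim(1-\gamma)|x|^\gamma$) Hardt--Simon lapse on $S_\lambda$ respectively, and exploiting that the critical weight $|x|^{-n}$ has log-divergent integral over dyadic annuli, which exactly cancels the $(\log)^{-2}$ from $|\nabla\eta|^2$. The key input is the same in both proofs — the strict inequality $\gamma_1^+>\frac{2-n}{2}>\gamma_1^-$, i.e.\ strict stability, plus the $|x|^\gamma$-asymptotics of the foliation lapse which reflect strict minimality — but your version is more robust in the sense that it avoids the eigenfunction expansion entirely in part (2) and avoids the pointwise maximum principle in part (1), at the cost of being slightly longer. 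Two small points worth stating explicitly if you write this up: (i) the rescaling reduction $\lambda\mapsto 1$ in part (1) changes the domain of the hypothesis $|w|\le C|x|^{(2-n)/2}$ from $\{|x|>1\}$ to $\{|x|>1/\lambda\}$, which is harmless because $w$ is smooth on the compact set $S\cap \overline B_{1/\lambda}$ and the bound extends after enlarging $C$; and (ii) the final step in part (1), $\int_S u_0^2|\nabla v|^2=0$ hence $v$ constant, uses that $S$ is connected, which holds by Theorem~\ref{thm:HardtSimon}.
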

\begin{proof}
(1) Without loss of generality we can work with $S_+$. The family $S_\lambda$ for $\lambda \to 1$ generates a positive Jacobi field $w_0$ on $S_+$. Since $\mathcal{C}$ is strictly minimizing, we have that $w_0 = c_1|x|^\gamma + O(|x|^{\gamma - \epsilon})$ for some $c_1, \epsilon > 0$ (see \cite[Theorem 3.2]{SimonHardt}), and recall that $\gamma > \frac{2-n}{2}$. 

Since $|w(x)|\leq C|x|^{\frac{2-n}{2}}$, there exists a $c > 0$  such that 
\begin{eqnarray*}
     w(x)\leq c w_0(x),
\end{eqnarray*}
for $x\in S_+$, and we choose $c_+$ to be the infimum of all such $c$. If $c_+ > 0$, then since $w$ decays faster than $w_0$ at infinity, there must be a point $x_0$ such that $w(x_0) = c_+ w_0(x_0)$. In this case the maximum principle implies that $w=c_+ w_0$, which contradicts that $w$ decays faster than $w_0$. It follows that $c_+ =0$, so $w \leq 0$. Similarly we can show that $w\geq 0$, so we have $w=0$. 

(2) Using (\ref{eqn: u}) note that on $\Sigma$ we get that $c_i^++c_i^-=0$. Since, $\gamma_1^+> \frac{2-n}{2}> \gamma_1^-$ we get that $c_i^-=0$ for all $i$ and hence we get that $u\equiv 0$ on $\mathcal C_1$.
\end{proof}

\subsection{Allard's theorem}

Allard's regularity theorem is a key tool in understanding local regularity of stationary varifolds. It essentially states that if in a ball a stationary integral varifold $V$ is close in the sense of measures to a hyperplane, then in a smaller ball $V$ can be represented by a $C^{1, \alpha}$ embedded hypersurface. See \cite{simon1984lectures}, \cite{allard1972first}, \cite{DeL12} for a more detailed version for varifolds with bounds on its mean curvature. Similarly, Allard~\cite{Allardboundary} (see also \cite{Bourni}) proved a boundary regularity theorem: if an integral varifold $V$ is stationary outside of a codimension-two $C^{1,\alpha}$ submanifold $\Sigma$, and in small balls $B_r(x_0)$ with $x_0\in \Sigma$, $V$ is close to a half plane in the sense of measures, then $V$ is a $C^{1,\alpha}$-graph in the balls $B_{r/2}(x_0)$.

In our setting, these regularity theorems imply the following. 
\begin{prop}\label{prop:FromAllard}
    Let $c_0, r_0 > 0$ be small. There exists an $\epsilon_0 > 0$ depending on $c_0, r_0$, with the following property. Let $\Vert g\Vert_{C^{2,\alpha}(\Sigma)} < \epsilon_0$ and suppose that $N$ is a stationary integral varifold in $B_1$ with boundary $\Sigma'$ given by the spherical graph of $g$ over $\Sigma$. In addition suppose that $\mu_N(B_1) < \frac{3}{2} \mu_\mathcal C (B_1)$. 
    Then in the annulus $B_1\setminus B_{r_0}$ the varifold $N$ is the spherical graph of a function $v$ over $\mathcal{C}$ with $\Vert v\Vert_{C^{2,\alpha}} < c_0$. 
\end{prop}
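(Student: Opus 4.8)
The plan is to combine the interior Allard regularity theorem, the boundary Allard regularity theorem, and a compactness/contradiction argument to upgrade measure-theoretic closeness to $C^{2,\alpha}$-graphicality over $\mathcal C$. First I would argue by contradiction: suppose the statement fails, so there exist fixed $c_0, r_0 > 0$ and a sequence $g_i$ with $\Vert g_i\Vert_{C^{2,\alpha}(\Sigma)} \to 0$ and stationary integral varifolds $N_i$ in $B_1$ with boundary $\Sigma_i' = \mathrm{graph}_{S^n}(g_i)$, satisfying the mass bound $\mu_{N_i}(B_1) < \tfrac{3}{2}\mu_{\mathcal C}(B_1)$, but such that $N_i$ is \emph{not} the spherical graph of a function $v_i$ over $\mathcal C$ in $B_1\setminus B_{r_0}$ with $\Vert v_i\Vert_{C^{2,\alpha}} < c_0$. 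By the compactness theorem for stationary integral varifolds with locally bounded mass (using the monotonicity formula and the uniform mass bound), after passing to a subsequence $N_i \rightharpoonup N_\infty$ as varifolds, where $N_\infty$ is a stationary integral varifold in $B_1$ with $\mu_{N_\infty}(B_1) \le \tfrac{3}{2}\mu_{\mathcal C}(B_1)$ and with boundary $\overline{\mathrm{spt}\,N_\infty}\cap\partial B_1 = \Sigma$ (since $g_i\to 0$), and boundary density $\tfrac12$ along $\Sigma$.

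The key step is to identify $N_\infty$ with $\mathcal C_1$ (with multiplicity one). Since $\Sigma$ is a smooth codimension-two submanifold of $\partial B_1$, the boundary Allard theorem applies near $\Sigma$: in a one-sided neighborhood of $\partial B_1$, $N_\infty$ is a $C^{1,\alpha}$ graph over $\mathcal C$ of a small function, hence multiplicity one there, and $N_i$ is likewise a $C^{1,\alpha}$ graph over $\mathcal C$ with small norm for $i$ large. Away from $\partial B_1$, I would use the interior Allard theorem together with the structure of tangent cones: the density of $N_\infty$ is bounded above by $\tfrac32$, so at every point the density is strictly less than $2$, which by Allard forces multiplicity-one $C^{1,\alpha}$ regularity wherever a tangent plane occurs, and in particular rules out higher-multiplicity planes. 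To conclude $N_\infty = \mathcal C_1$, I would invoke the classification: any stationary integral varifold in $B_1$ with boundary $\Sigma$, boundary density $\tfrac12$, and mass below $\tfrac32\mu_{\mathcal C}(B_1)$ must, by a unique continuation / monotonicity argument starting from the boundary regularity, agree with $\mathcal C_1$ — alternatively, one combines the boundary graphicality, the fact that $\mathcal C$ is the unique minimal cone of its type (Simon--Solomon rigidity) and strict minimality, and upper semicontinuity of density to force the blowdown at the origin to be $\mathcal C$ with multiplicity one, and then equality on all of $B_1$. Given $N_\infty = \mathcal C_1$, standard Allard-type a priori estimates and elliptic Schauder theory for the minimal surface equation near $\mathcal C$ (the operator $\mathcal M_{\mathcal C}$, with $\mathcal L_{\mathcal C}$ uniformly elliptic on compact subsets of $\mathcal C\setminus\{0\}$) upgrade the $C^{1,\alpha}$ convergence $N_i\to\mathcal C_1$ to $C^{2,\alpha}$ convergence on the compact region $\mathcal C\cap(B_1\setminus B_{r_0})$, since that region stays a definite distance from the singular point $0$. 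Hence for $i$ large, $N_i$ is the spherical graph over $\mathcal C$ of a function $v_i$ with $\Vert v_i\Vert_{C^{2,\alpha}(\mathcal C\cap(B_1\setminus B_{r_0}))} < c_0$, contradicting the choice of the $N_i$.

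The main obstacle I expect is the rigidity step: showing that the varifold limit $N_\infty$ is exactly $\mathcal C_1$ with multiplicity one, rather than, say, $\mathcal C_1$ with some troublesome behavior at the origin, or a Hardt--Simon leaf, or a union of pieces that still respects the mass bound. The mass bound $\tfrac32 \mu_{\mathcal C}(B_1)$ is precisely what excludes multiplicity two (which would cost $2\mu_{\mathcal C}(B_1)$), but one still must rule out other single-multiplicity stationary varifolds spanning $\Sigma$; here the strict minimality of $\mathcal C$, the boundary regularity forcing agreement with $\mathcal C$ near $\partial B_1$, and a continuation argument (or a direct comparison using that $\mathcal C_1$ is the unique Plateau solution for $\Sigma$, which follows from the Hardt--Simon foliation as noted in the introduction) should close this gap. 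The remaining ingredients — varifold compactness, Allard interior and boundary regularity, and Schauder estimates for $\mathcal M_{\mathcal C}$ — are standard once the limit is identified.
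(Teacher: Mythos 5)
The overall architecture of your argument (contradiction, varifold compactness, identify the limit with $\mathcal{C}\cap B_1$, then Allard and Schauder to upgrade regularity) matches the paper's, but there is a genuine gap at the step you yourself flag as the main obstacle: identifying $N_\infty$ with $\mathcal{C}\cap B_1$ at multiplicity one. The alternatives you offer do not close this gap. Simon--Solomon rigidity classifies complete minimal hypersurfaces in $\mathbb{R}^{n+1}$ asymptotic to $\mathcal{C}$ at infinity, so it says nothing directly about a stationary varifold confined to $B_1$; there is no ``blowdown at the origin'' in this setting, and upper semicontinuity of density by itself does not constrain the support of $N_\infty$ away from $\partial B_1$. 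Likewise, a ``unique continuation from boundary regularity'' for stationary integral varifolds is not a theorem you can invoke here. And the Plateau-uniqueness fact you cite from the introduction is for area minimizers, whereas $N_i$ is only stationary.

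The mechanism the paper uses --- and the one you need --- is the Hardt--Simon foliation $\{S_\lambda\}$ applied as barriers directly to the sequence $N^i$, before passing to the limit. Since the boundary $\Sigma'_i$ is the graph of $g_i$ over $\Sigma$ with $g_i\to 0$, each $N^i$ has boundary trapped between $S_{\lambda_i}\cap\partial B_1$ and $S_{-\lambda_i}\cap\partial B_1$ for some $\lambda_i\to 0$; the foliation leaves are minimal, so the strong maximum principle forbids $N^i$ from crossing any $S_\lambda$ with $|\lambda|>\lambda_i$, hence $\mathrm{spt}\,N^i$ lies between $S_{\pm\lambda_i}$. Passing to the limit pins $\mathrm{spt}\,N_\infty\subset\mathcal{C}\cap B_1$, and then the constancy theorem together with the mass bound $\mu_{N_\infty}(B_1)\le\tfrac32\mu_{\mathcal C}(B_1)$ forces $N_\infty=\mathcal{C}\cap B_1$ with multiplicity one. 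Only after this is established do the interior and boundary Allard theorems, followed by Schauder estimates, deliver the $C^{2,\alpha}$ graphicality on $B_1\setminus B_{r_0}$; that final part of your argument is fine. So you should replace the heuristic mix of ``rigidity/continuation'' arguments with the explicit barrier argument via the Hardt--Simon foliation and the constancy theorem.
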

\begin{proof}
We argue by contradiction. Suppose that for given $c_0, r_0 > 0$ we have a sequence $N^i$ whose boundaries are the graphs of $g_i$ over $\Sigma$ with $\Vert g_i\Vert_{C^{2,\alpha}} \to 0$. 

Using the family of hypersurfaces $S_\lambda$ as barriers, we see that $N^i$ is contained between $S_{\pm \lambda_i}$ for a sequence $\lambda_i\to 0$. Up to choosing a subsequence we can assume that the $N^i$ converge to a stationary varifold $N^\infty$, which is necessarily supported on $\mathcal{C}\cap B_1$. The bound on the total mass together with the constancy theorem imply that $N^\infty = \mathcal{C}\cap B_1$. Using Allard's interior and boundary regularity results~\cite{allard1972first, Allardboundary} we find that for the given $r_0$ and any small $\delta > 0$, for sufficiently large $i$ the varifold $N^i$ is the graph of a function $v_i$ over $\mathcal{C}$ on $B_1\setminus B_{r_0}$, with $\Vert v_i\Vert_{C^{1,\alpha}} < \delta$. The higher regularity estimate $\Vert v_i\Vert_{C^{2,\alpha}} < c_0$ follows from this. 
\end{proof}

We will also need the following theorem (see e.g. \cite[Theorem 13.1]{Edelen2024}), stating that for large $i$, the hypersurfaces $M^i_j$ are close to the cone $\mathcal{C}$ at all scales at which the density is close to that of $\mathcal{C}$. 

\begin{theorem}\label{prel:thm:Density}
      Given $\epsilon>0$, there exist $\delta (\epsilon, \mathcal C)$, $\tau (\epsilon, \mathcal C)$ with the following properties. Suppose that $V$ is a multiplicity one stationary integral $n$-varifold in $B_1$ and for some $r_0 < 1$ we have
 \begin{eqnarray*}
        \mbox{spt}\, V\cap B_1\backslash B_\frac{1}{2}&=& graph_\mathcal C (u),\:\: \|u\|_{C^2(B_1\setminus B_{1/2})}\leq \delta\\
        \Theta_V(1,0)&\leq& \Theta(\mathcal C)+\tau,\\ 
        \Theta_V\left(\frac{r_0}{4},0\right)&\geq& \Theta(\mathcal C)-\tau.
    \end{eqnarray*}
  Then $u $ can be extended such that, 
\begin{equation}
\mbox{spt}\,V\cap B_1\backslash B_{r_0}=graph_\mathcal C (u)\cap B_1\backslash B_{r_0}, \:\:\: \|u\|_{C^2_1(B_1\setminus B_{r_0})}\leq \epsilon
\end{equation}
\end{theorem}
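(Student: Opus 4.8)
The plan is to argue by contradiction: one rescales $V$ at the first scale (moving inward from $\partial B_1$) at which it fails to be graphically $\epsilon$-close to $\mathcal C$, extracts in the limit a complete minimal hypersurface asymptotic to $\mathcal C$, invokes the Simon--Solomon classification (Theorem~\ref{Theo: Simon-Solomon}), and eliminates each of the three resulting possibilities using the two quantitative hypotheses. Suppose then that the statement fails for some fixed $\epsilon>0$: there are $\delta_i,\tau_i\to 0$, radii $r_0^i\in(0,1)$, and multiplicity one stationary integral $n$-varifolds $V_i$ in $B_1$ satisfying the three hypotheses with $\delta=\delta_i$, $\tau=\tau_i$, $r_0=r_0^i$, for which the conclusion fails. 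The monotonicity formula together with the two density bounds gives $\Theta_{V_i}(r,0)\in[\Theta(\mathcal C)-\tau_i,\ \Theta(\mathcal C)+\tau_i]$ for all $r\in[r_0^i/4,1]$.

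Let $\rho_i\in(r_0^i,1]$ be the infimum of those $\rho$ for which $V_i$ is the graph over $\mathcal C$ of some $u_i$ on $B_1\setminus B_\rho$ with $\|u_i\|_{C^{2,\alpha}_1(B_1\setminus B_\rho)}\le\epsilon$. Since the weighted norm is comparable to the ordinary $C^{2,\alpha}$ norm on $B_1\setminus B_{1/2}$ and $\delta_i<\epsilon$, we have $\rho_i\le 1/2$; since the conclusion fails at $r_0^i$, we have $\rho_i>r_0^i$. If $\rho_i\ge\rho_0>0$ along a subsequence, then passing to a further subsequence the $V_i$ converge as varifolds to a stationary limit which, by the graphical decay in $B_1\setminus B_{1/2}$, the mass bound and the constancy theorem, equals $\mathcal C\cap B_1$; Allard's regularity theorem in the fixed annulus $B_1\setminus B_{\rho_0/2}$ then gives graphicality with arbitrarily small $C^{2,\alpha}_1$ norm for large $i$, contradicting the definition of $\rho_i$. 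Hence $\rho_i\to 0$, and by construction the weighted norm of $V_i$ over $\mathcal C$ ``just reaches'' $\epsilon$ near the scale $\rho_i$.

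Now rescale $\widehat V_i:=(\rho_i^{-1})_\#V_i$, a stationary integral varifold in $B_{1/\rho_i}$ with $1/\rho_i\to\infty$. On $B_{1/\rho_i}\setminus B_{1/(2\rho_i)}$ it is the graph over $\mathcal C$ of a function with weighted $C^{2,\alpha}_1$ norm $\le C\delta_i\to 0$; on $B_{1/\rho_i}\setminus B_1$ it is the graph of a function $\widehat u_i$ with $\|\widehat u_i\|_{C^{2,\alpha}_1}\le\epsilon$, with equality essentially attained near $\partial B_1$; and $\Theta_{\widehat V_i}(r,0)\in[\Theta(\mathcal C)-\tau_i,\Theta(\mathcal C)+\tau_i]$ for $r\in[r_0^i/(4\rho_i),1/\rho_i]$. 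Passing to a subsequence, $\widehat V_i\to\widehat V_\infty$ in $\mathbb R^{n+1}$, a stationary integral varifold which is the graph over $\mathcal C$ of some $\widehat u_\infty$ with $\|\widehat u_\infty\|_{C^{2,\alpha}_1}\le\epsilon$ outside $\overline B_1$, and whose density ratio is $\equiv\Theta(\mathcal C)$ for $r\ge 1/4$ (note $r_0^i/(4\rho_i)<1/4$). By the monotonicity formula $\widehat V_\infty$ is a cone away from $B_{1/4}$, so there it coincides with a minimal cone that is a small graph over $\mathcal C$; with the graphical structure and unique continuation on the regular part this shows $\widehat V_\infty$ is a complete minimal hypersurface whose tangent cone at infinity is a rotation of $\mathcal C$, hence by Theorem~\ref{Theo: Simon-Solomon} it is, up to a translation and a dilation, one of $\mathcal C$, $S_+$, $S_-$.

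It remains to rule out each possibility, and this is where I expect the real difficulty to lie. A dilated, translated Hardt--Simon leaf $S_\lambda$ with $\lambda\ne 0$, and likewise a nontrivial translate $\mathcal C+v$, has density ratio strictly below $\Theta(\mathcal C)$ at every finite radius about the relevant base point, with a definite gap once the base-point displacement is controlled; this is incompatible with the lower bound $\Theta_{\widehat V_i}(r_0^i/(4\rho_i),0)\ge\Theta(\mathcal C)-\tau_i$ provided $\tau$ was chosen small enough in terms of $\epsilon$ — precisely the role of the hypothesis $\Theta_V(r_0/4,0)\ge\Theta(\mathcal C)-\tau$, which prevents a small Hardt--Simon smoothing, or a drifting cone, from appearing below scale $r_0$. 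A nontrivial rotation $R\mathcal C$ is excluded instead by the top-annulus hypothesis: the graph of $R\mathcal C$ over $\mathcal C$ has a scale-invariant weighted norm comparable to $|R-I|$, so matching it against the vanishing of the weighted norm of $\widehat u_i$ near $\partial B_{1/\rho_i}$ forces $R=I$. Hence $\widehat V_\infty=\mathcal C$, so $\widehat u_i\to 0$ in $C^{2,\alpha}_{\mathrm{loc}}(\mathbb R^{n+1}\setminus\overline B_1)$, contradicting that $\|\widehat u_i\|_{C^{2,\alpha}_1}$ attained the value $\epsilon$ near $\partial B_1$. The delicate point is that $V_i$ is pinned to be a graph over the fixed cone $\mathcal C$ only in the top annulus, so at intermediate scales it may be better modeled by a translated or rotated cone; making the elimination above rigorous therefore requires tracking the optimal center and axis at each dyadic scale and bounding them via the two quantitative hypotheses — a re-centering argument of the type underlying Allard's and L.\ Simon's theorems. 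This is carried out in \cite[Theorem~13.1]{Edelen2024}.
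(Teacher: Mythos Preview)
The paper does not give its own proof of this theorem: it is quoted as a background result with a reference to \cite[Theorem~13.1]{Edelen2024}, and no argument is supplied. Your proposal ends at the same citation, so in that sense you match the paper exactly.

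Your sketch is a reasonable outline of how such a statement is proved, and you correctly isolate the genuine obstruction. Two remarks on the sketch itself. First, once you have shown that the rescaled limit $\widehat V_\infty$ has density ratio identically $\Theta(\mathcal C)$ for all $r\ge 1/4$, the monotonicity formula already forces it to be a cone with vertex at the origin outside $B_{1/4}$; this by itself rules out the Hardt--Simon leaves and nontrivial translates, so your subsequent appeal to the lower density bound to exclude those cases is redundant (though not incorrect). The only surviving candidate is a small rotation $R\mathcal C$. Second, your attempt to force $R=I$ by ``matching against the vanishing of the weighted norm near $\partial B_{1/\rho_i}$'' does not work as stated, because the varifold convergence $\widehat V_i\to\widehat V_\infty$ is only local: information at the escaping outer scale $1/\rho_i$ is lost in the limit, and there is nothing preventing $\widehat V_\infty=R\mathcal C$ with $|R-I|$ of size roughly $\epsilon$. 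You recognize this and name it correctly: controlling the drift of the optimal rotation (and center) across dyadic scales is exactly the content that a single blow-up cannot capture, and is what the cited argument supplies via a \L ojasiewicz--Simon / decay-of-excess mechanism.
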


\subsection{Three Annulus Lemma}
A final important ingredient in our argument is the three annulus lemma. To state this, we define the following scaled $L^2$-norm for a function $u$ on an annular region on $M_1^i$:
$$ \|u\|^2_{\rho_0, k}\coloneqq \int_{M_1^i\cap B_{\rho_0^k}\backslash B_{\rho_0^{k+1}}}|u|^2 r^{-n},$$
where $k$ is an integer, $\rho_0\in (0,1)$ and $r$ denotes the distance from the origin. We use the same notation for norms of functions over other hypersurfaces such as $M_2^i$ and its rescalings, or over $\mathcal{C}$. 

We have the following three annulus lemma, see e.g. Simon~\cite{simon1983isolated}, or \cite{székelyhidi2021minimalhypersurfacescylindricaltangent} for a similar statement. 

\begin{lemma}[Three Annulus Lemma]\label{prelim:ThreeAnnulusLemma}
There exist $\epsilon, \rho_0 > 0$ depending on $\mathcal{C}$ with the following property. Suppose that $N_1, N_2$ are minimal hypersurfaces in the annulus $B_{\rho_0^k}\setminus B_{\rho_0^{k+3}}$ for an integer $k$. Suppose that $N_j$ is the spherical graph of $v_j$ over $\mathcal{C}$ on this annulus, where $\Vert v_j\Vert_{C^{2,\alpha}_1} < \epsilon$, and write $N_2$ as the spherical graph of $u$ over $N_1$. If,
$$\|u\|_{\rho_0, k+1}\geq \rho_0^{\frac{2-n}{2}} \|u\|_{\rho_0, k}, $$
then we have
$$ \|u\|_{\rho_0, k+2}\geq \rho_0^{\frac{2-n}{2}} \| u\|_{\rho_0, k+1}.$$
\end{lemma}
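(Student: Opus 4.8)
The plan is to argue by contradiction, rescaling so the "bad" annulus sits in a fixed position, and then compactly extract a limiting Jacobi field on $\mathcal{C}$ to which we can apply a clean spectral version of the three-annulus estimate. Suppose the conclusion fails: there are sequences of minimal hypersurfaces $N_1^{(m)}, N_2^{(m)}$ in annuli $B_{\rho_0^{k_m}}\setminus B_{\rho_0^{k_m+3}}$, graphical over $\mathcal{C}$ with $C^{2,\alpha}_1$-norms tending to $0$ (we may take $\epsilon\to 0$ along the sequence, or fix $\epsilon$ small and extract), with $N_2^{(m)} = \mathrm{graph}_{N_1^{(m)}}(u_m)$ satisfying $\|u_m\|_{\rho_0,k_m+1}\ge \rho_0^{\frac{2-n}{2}}\|u_m\|_{\rho_0,k_m}$ but $\|u_m\|_{\rho_0,k_m+2} < \rho_0^{\frac{2-n}{2}}\|u_m\|_{\rho_0,k_m+1}$. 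Rescale $\mathbb{R}^{n+1}$ by $\rho_0^{-k_m-1}$ so everything lives on the fixed annulus $A\coloneqq B_{\rho_0^{-1}}\setminus B_{\rho_0^{2}}$ around $\mathcal{C}$, and normalize $u_m$ (a function, after rescaling, on $N_1^{(m)}\cap A$) so that $\|u_m\|_{\rho_0,0}=1$; here I use that $\mathcal{C}$ is a cone so the rescaled problem has the identical form.

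Next I would set up the PDE for $u_m$. Since $N_1^{(m)}$ and $N_2^{(m)}$ are both minimal, $u_m$ solves the exact (nonlinear) minimal surface equation over $N_1^{(m)}$, which by the structure recalled in Section~\ref{sec:prelim} can be written as $\mathcal{L}_{N_1^{(m)}} u_m + Q_{N_1^{(m)}}(u_m) = 0$, where the quadratic remainder $Q$ vanishes to second order. Because $N_1^{(m)}\to\mathcal{C}$ in $C^{2,\alpha}_1$ and the normalized $u_m$ are bounded in the relevant $L^2$ norm, elliptic estimates (interior Schauder on the annulus, using that we have uniform $C^{2,\alpha}$ bounds on the coefficients) give uniform $C^{2,\alpha}$ bounds on $u_m$ on slightly smaller annuli; the quadratic term $Q_{N_1^{(m)}}(u_m)$ is then $o(1)$ in $C^{\alpha}$ as $m\to\infty$ since $\|u_m\|_{C^{2,\alpha}}$ stays bounded while $u_m$ itself is small (it is $u_m$, not just $\nabla u_m$, that enters $Q$ to second order — if needed one first passes to a further normalization to ensure smallness, or absorbs it using that the original data have $\epsilon\to 0$). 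Passing to a subsequence, $u_m\to u_\infty$ in $C^2_{loc}$ on the open annulus $B_{\rho_0^{-1}}\setminus B_{\rho_0^{2}}$, with $\mathcal{L}_{\mathcal C} u_\infty = 0$, so $u_\infty$ is a Jacobi field on $\mathcal{C}$ in the annulus, of the form \eqref{eqn: u} with coefficients $c_i^\pm$. Moreover $\|u_\infty\|_{\rho_0,0}=1$ (the $L^2$ norms converge since convergence is in $C^2_{loc}$ and the annuli are compactly contained after a harmless shrinking, or one shows no mass escapes to the boundary spheres via the uniform $C^{2,\alpha}_1$ control), $\|u_\infty\|_{\rho_0,1}\ge \rho_0^{\frac{2-n}{2}}$, and $\|u_\infty\|_{\rho_0,2}\le \rho_0^{\frac{2-n}{2}}\|u_\infty\|_{\rho_0,1}$.

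The final step is the spectral computation: for a Jacobi field $u_\infty(r\omega)=\sum_i(c_i^+ r^{\gamma_i^+}+c_i^- r^{\gamma_i^-})\phi_i(\omega)$, the norms $\|u_\infty\|_{\rho_0,k}^2$ decompose orthogonally over $i$, and on the $i$-th mode the ratio $\|\cdot\|_{\rho_0,k+1}/\|\cdot\|_{\rho_0,k}$ of a pure power $r^{\gamma}$ equals $\rho_0^{\gamma}$. Since for all $i$ we have $\gamma_i^- \le \frac{2-n}{2} \le \gamma_i^+$ with strict inequality unless $\mu_i = 0$ — and the eigenvalue $\mu = \frac{2-n}{2}$ exponent corresponds exactly to the borderline $r^{\frac{2-n}{2}}$ — a direct estimate shows that if the ratio over the $k$-th to $(k+1)$-th annulus is $\ge \rho_0^{\frac{2-n}{2}}$, the $r^{\gamma_i^+}$-modes (which grow faster, i.e.\ have $\gamma_i^+\ge\frac{2-n}{2}$) must dominate, and then the ratio over the next annulus is strictly larger than $\rho_0^{\frac{2-n}{2}}$, unless $u_\infty$ is identically a combination of exactly-borderline modes — but $\mathcal{C}$ being strictly stable and the quadratic cones' spectrum being explicitly known means no eigenvalue gives exponent exactly $\frac{2-n}{2}$, so $u_\infty\equiv 0$. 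This contradicts $\|u_\infty\|_{\rho_0,0}=1$, completing the argument. I expect the main obstacle to be the compactness step: carefully justifying that the normalized graph functions $u_m$ converge in $C^2_{loc}$ with the $L^2$-norm identities surviving in the limit (no concentration near the inner/outer boundary spheres of the annulus), and that the nonlinear error $Q_{N_1^{(m)}}(u_m)$ genuinely vanishes in the limit — this requires combining the hypothesis $\|v_j\|_{C^{2,\alpha}_1}<\epsilon$ with elliptic regularity to upgrade the a priori $L^2$ control on $u_m$ to uniform $C^{2,\alpha}$ control on a fixed sub-annulus, which is where one must be slightly careful about scaling weights.
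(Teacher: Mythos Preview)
Your proposal is correct and follows essentially the same approach as the paper: argue by contradiction, reduce (by scaling) to a fixed annulus, normalize by the $L^2$-norm on the middle sub-annulus, use elliptic estimates to extract a nonzero Jacobi field $w$ on $\mathcal{C}$ satisfying the borderline three-annulus inequalities with non-strict signs, and then invoke the spectral fact that $\mathcal{C}$ has no homogeneous Jacobi field of rate exactly $\frac{2-n}{2}$ to obtain a contradiction. One small bookkeeping slip: after your rescaling by $\rho_0^{-k_m-1}$ the three sub-annuli sit at indices $-1,0,1$ (inside $B_{\rho_0^{-1}}\setminus B_{\rho_0^{2}}$), so the limiting inequalities should read $\|u_\infty\|_{\rho_0,0}\ge \rho_0^{\frac{2-n}{2}}\|u_\infty\|_{\rho_0,-1}$ and $\|u_\infty\|_{\rho_0,1}\le \rho_0^{\frac{2-n}{2}}\|u_\infty\|_{\rho_0,0}$, not at indices $1,2$; this does not affect the argument.
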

\begin{proof}

The proof is essentially the same as that of Simon \cite[Part II Lemma 3.3]{simon1983isolated}. We briefly sketch the argument, for $k=0$. We argue by contradiction, so suppose that we have sequences $N^i_1, N^i_2$ satisfying the assumptions for a small $\rho_0 > 0$ and $\epsilon = \epsilon_i \to 0$. We write $N^i_2$ as the graph of $u_i$ over $N^i_1$ on the annulus $B_1\setminus B_{\rho_0^3}$, and it follows that $\Vert u_i\Vert_{C^2_1(B_1\setminus B_{\rho_0^3})} \to 0$. We suppose in addition that for all $i$ we have

\[
    \|u_i\|_{\rho_0, 1}\geq \rho_0^{\frac{2-n}{2}} \|u_i\|_{\rho_0, 0},
\]
but at the same time 
\[ 
    \|u_i\|_{\rho_0, 2} <  \rho_0^{\frac{2-n}{2}} \|u_i\|_{\rho_0, 1}.
\]
From the above two equations we have $\|u_i\|_{\rho_0, 1}\neq 0$, so we can define
$$ \tilde u_i\coloneqq \frac{u_i}{\|u_i\|_{\rho_0, 1}}.$$
Note that by our assumption we have a uniform $L^2$-bound for $u_i$ on the annulus $B_1\setminus B_{\rho_0^3}$. 
Using that $N^i_1, N^i_2$ are minimal hypersurfaces, we obtain uniform derivative estimates for $u_i$ on any compact subset of $N^i_1$. Using the convergence $N^i_1 \to \mathcal{C}$ on the annulus $B_1\setminus B_{\rho_0^3}$, we can extract a limit $u_i \to w$, where $w$ is defined on the annulus $B_1\setminus B_{\rho_0^3}$ in $\mathcal{C}$. The convergence $u_i\to w$ is smooth on compact subsets of the annulus, so $w$ is not identically zero. In addition $\mathcal{L}_\mathcal{C} w=0$, and we have
\begin{eqnarray*}
    \| w\|_{\rho_0, 1} &\geq& \rho_0^{\frac{2-n}{2}} \|w\|_{\rho_0, 0}, \\
    \|w\|_{\rho_0, 2} &\leq& \rho_0^{\frac{2-n}{2}}\|w\|_{\rho_0, 1}.
\end{eqnarray*}
This is a contradiction to the three annulus lemma for Jacobi fields on $\mathcal C$, since there are no homogeneous Jacobi fields of growth rate equal to $\frac{2-n}{2}$. 
\end{proof}

\section{Proof of theorem \ref{thm:main}} \label{sec:mainproof}

We  prove Theorem~\ref{thm:main}  by contradiction, first assuming a mass bound that rules out issues with multiplicity. For this we show the following, the proof of which will take up most of this section. 

\begin{theorem}\label{main:maintheo}
Let $M_1^i$ and $M_2^i$ be minimal hypersurfaces in $B_1\subset \mathbb R^{n+1}$ such that $\Theta_{M_j^i}(1,0)< \frac{3}{2}\Theta(\mathcal C)$, and for a sequence of functions $g_i$ on $\Sigma$ with $\Vert g_i\Vert_{C^{2,\alpha}}\to 0$ we have the boundary conditions
\[ M_1^i \cap \partial B_1=M_2^i\cap \partial B_1=graph_{\Sigma}\{ g_i\}. \]
Then $M_1^i=M_2^i$ for large $i$.
\end{theorem}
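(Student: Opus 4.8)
The plan is to argue by contradiction: assume that, after passing to a subsequence, $M^i_1 \neq M^i_2$ for every $i$, and extract from this data one of the three obstructions listed in the introduction, each of which is excluded by Lemma~\ref{prelim:GammaBound} or by the Simon--Solomon classification, Theorem~\ref{Theo: Simon-Solomon}.

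First I would reduce to a graphical picture using the regularity results. By Proposition~\ref{prop:FromAllard}, for large $i$ both $M^i_1$ and $M^i_2$ are spherical graphs over $\mathcal{C}$ on the annulus $B_1 \setminus B_{1/2}$ with $C^{2,\alpha}$-norms tending to zero. Combining this with Theorem~\ref{prel:thm:Density} and the density bound $\Theta_{M^i_j}(1,0) < \tfrac{3}{2}\Theta(\mathcal{C})$, there is a scale $r_i \in [0, 1/2)$ --- the infimum of the radii down to which both surfaces remain good graphs over $\mathcal{C}$ --- and, after passing to a subsequence, exactly one of two situations occurs. Either $r_i = 0$, so the density of the $M^i_j$ at the origin stays close to $\Theta(\mathcal{C})$ and the surfaces are cone-like all the way in; or $r_i > 0$ with $r_i \to 0$, in which case the density at the origin of the rescalings $r_i^{-1} M^i_j$ has dropped definitely below $\Theta(\mathcal{C})$, the $M^i_j$ are smooth near the origin, and these rescalings subconverge --- after centering by a small translation and rotation --- to complete smooth minimal hypersurfaces asymptotic to $\mathcal{C}$ at infinity, hence by Theorem~\ref{Theo: Simon-Solomon} to $S_+$ or $S_-$ up to translation and dilation.

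On the graphical region $B_1 \setminus B_{r_i}$, write $M^i_2$ as the spherical graph of a function $u_i$ over $M^i_1$. Then $u_i \to 0$ locally away from the origin, $u_i$ vanishes on the common boundary $M^i_1 \cap \partial B_1 = M^i_2 \cap \partial B_1$, and $u_i$ solves a quasilinear elliptic equation whose suitably rescaled linearization converges to $\mathcal{L}_{\mathcal{C}}$; we may assume $u_i \not\equiv 0$. After normalizing $u_i$ by an annular norm $\|u_i\|_{\rho_0, k}$ and applying the Three Annulus Lemma~\ref{prelim:ThreeAnnulusLemma} on the scales $\rho_0^k$ between $1$ and $r_i$, one obtains the usual growth dichotomy for $u_i$ at the critical rate $|x|^{\frac{2-n}{2}}$; depending on how the scale at which $u_i$ concentrates compares to the transition scale $r_i$, passing to the limit (using the centering above) produces one of the following. (a) A nonzero Jacobi field $v$ on $\mathcal{C}\cap B_1$ with $v = 0$ on $\Sigma$ and $|v(x)| = O(|x|^{\frac{2-n}{2}})$ as $|x|\to 0$; expanding $v$ in the modes $|x|^{\gamma^{\pm}_i}\phi_i$, the boundary condition pairs the exponents and the decay condition then kills the remaining $|x|^{\gamma^{-}_i}$ terms, forcing $v \equiv 0$ --- this is Lemma~\ref{prelim:GammaBound}(2), a contradiction. (b) A nonzero Jacobi field $v$ on a Hardt--Simon foliate $S = S_\lambda$, $\lambda \neq 0$, with $|v(x)| = O(|x|^{\frac{2-n}{2}})$ as $|x|\to\infty$ --- this contradicts Lemma~\ref{prelim:GammaBound}(1). (c) Two distinct complete minimal hypersurfaces $S_1 \neq S_2$ asymptotic to $\mathcal{C}$ at infinity, with $S_2$ approaching $S_1$ faster than $|x|^{\frac{2-n}{2}}$; by Theorem~\ref{Theo: Simon-Solomon} each $S_j$ is $\mathcal{C}$, $S_+$, or $S_-$ up to translation and dilation, and two distinct such surfaces asymptotic to a common quadratic cone differ at infinity at rate no faster than the Hardt--Simon / scaling rate $|x|^{\gamma}$ (or the translation rate $|x|^{0}$), and $\gamma > \tfrac{2-n}{2}$ --- a contradiction. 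Since every possibility is excluded, $M^i_1 = M^i_2$ for large $i$.

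The step I expect to be the main obstacle is the blow-up at the transition scale $r_i$ in the smoothing case: one must identify the correct scale, and arrange --- by translations and rotations --- that the rescalings of $M^i_1$, of $M^i_2$, and of the normalized difference $u_i$ all converge to nontrivial limits simultaneously, so that the limiting Jacobi field is genuinely nonzero rather than lost under the rescaling. This is entangled with a second subtlety: the slowly decaying translation and rotation modes of $u_i$, with homogeneities $|x|^{0}$ and $|x|^{\gamma}$. These are harmless for the Three Annulus Lemma --- its critical rate $|x|^{\frac{2-n}{2}}$ is not the homogeneity of any Jacobi field on $\mathcal{C}$ --- but they can dominate $u_i$, so one must project them off, re-centering the blow-ups accordingly, in order to expose the obstruction and apply Lemma~\ref{prelim:GammaBound}. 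Matching the three annulus analysis cleanly across the junction between the cone region $B_1 \setminus B_{r_i}$ and the smooth region $B_{r_i}$ is the remaining technical point.
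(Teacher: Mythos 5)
Your plan matches the paper's broad strategy almost exactly: argue by contradiction, reduce to a graphical picture with Proposition~\ref{prop:FromAllard} and Theorem~\ref{prel:thm:Density}, run the Three Annulus Lemma on the graph $u_i$ of $M^i_2$ over $M^i_1$, and extract one of the three obstructions from the introduction, each killed by Lemma~\ref{prelim:GammaBound} or Theorem~\ref{Theo: Simon-Solomon}. Cases (a) and (b) are handled as in the paper. The one genuine gap is in case (c).

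You dispose of (c) by asserting that two distinct members of the family $a + Q(S_\lambda)$, both asymptotic to the same cone $\mathcal{C}$, must separate at infinity at a rate no faster than $|x|^{\gamma}$ (or $|x|^0$). As stated this is only a heuristic: you would need to show that a nontrivial difference in the parameters $(a,Q,\lambda)$ produces a nonvanishing leading term in the graph function, with the claimed exponent --- i.e.\ rule out cancellations among the dilation, translation, and rotation contributions. That requires a careful expansion which you have not supplied, and it is not an immediate consequence of Theorem~\ref{Theo: Simon-Solomon}. The paper avoids this entirely with a barrier argument (Proposition~\ref{prop: distinct}): after normalizing so that one limit is $S_{\lambda_1}$, it slides the Hardt--Simon foliation $S_\Lambda$ as barriers, using that $S_\Lambda$ separates from $S_{\lambda_1}$ at the slower rate $|x|^{\gamma}$, together with the strong maximum principle of Solomon--White / Wickramasekera at a first touching point, to force the two limits to coincide. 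This is both more rigorous and more robust than a leading-order parameter count.

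A related point: you flag the need to ``project off'' the slow translation, rotation, and dilation modes of $u_i$ and re-center accordingly. The paper does not do this. Instead it (i) centers by choosing the translation point $x^i_1$ to minimize the density-drop radius, which in the limit forces $a_1 = 0$ when $\widetilde M_1^\infty$ is a cone (Proposition~\ref{prop:Milimit}); (ii) uses the monotonicity formula to rule out $\widetilde M_1^\infty$ being a cone at all (Proposition~\ref{prop:same limit}); and (iii) handles the residual translation/rotation/dilation mismatch between the two limits by the barrier argument above, rather than by subtracting modes from $u_i$. If you want to carry out your proposal as written, the mode-projection step would need to be made precise; the paper's centering-plus-barriers route is a cleaner way to the same end and I'd suggest adopting it.
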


First, it follows from Proposition~\ref{prop:FromAllard} that 
we have a sequence $\epsilon_i \to 0$ such that in the annulus $B_1\backslash B_{\epsilon_i}$, the $M^i_j$ are the graphs of $u^i_j$ over $\mathcal{C}$ where $\Vert u_j\Vert_{C^{2,\alpha}_1} < \epsilon_i$. 
Theorem \ref{prel:thm:Density} gives a more refined result, stating  that at scales at which $M_j^i$ is close to the cone in terms of density, it will also be graphical over the cone. With this motivation in mind, we fix a small $\tau > 0$ (to be chosen later), and for $j=1,2$ we define the following:  for $x\in B_{1/2}(0)$ we let
\[ 
r_j^i(x)\coloneqq\inf \left\{r < 1/2\,:\,\Theta(\mathcal C)-\tau\leq\Theta_{M_j^i}\left(\frac{r}{4}, x\right)\leq\frac{3}{2}\Theta(\mathcal C) \right\}, 
\]
and we let 
\begin{equation}\label{main:r_1}
r_j^i = \inf_{x\in B_{1/2}} r^i_j(x). 
\end{equation}

Note that for $j=1,2$ we have $M_j^i\rightarrow \mathcal C$ as $i\to\infty$. Therefore for any $x\in B_{1/2}(0)$ and $r < \frac{1}{2}$ we have
\begin{equation}\label{eq:Thetalim}
    \lim_{i\rightarrow\infty}\Theta_{M_j^i} (r, x)=\Theta_{\mathcal{C}}(r,x).
\end{equation}
It follows that for large $i$ the sets whose infimum we take above are nonempty, and also $r^i_j\to 0$ as $i\to\infty$. 
Let us also write $x^i_1, x^i_2$ for points at which these infima are achieved. From \eqref{eq:Thetalim} we  have that
$x^i_1, x^i_2 \to 0$. We can view $x^i_1, x^i_2$ as the points around which the $M^i_j$ look ``most conical", and the $r^i_j$ as the smallest scale at which they still look conical. Note that it is possible that $r^i_j = 0$. 

Switching $M^i_1$ with $M^i_2$ along a subsequence if necessary, we can assume that $r_1^i\leq r_2^i$ for all $i$. Let us translate both sequences by the same quantities so that the $M_1^i$ are ``centered" at the origin, i.e, let us define 
$$ \hat M_1^i\coloneqq M_1^i-x^i_1$$ 
and 
$$\hat M_2^i=M_2^i-x^i_1.$$ 
After this translation we have 
$$r_1^i=\inf\left\{r < 1/2: \Theta(\mathcal C)-\tau\leq \Theta_{\hat M_1^i}\left(\frac{r}{4}, 0\right)\leq \frac{3}{2}\Theta(\mathcal C)\right\}, $$
and we define 
$$ \tilde r_2^i\coloneqq \inf\left\{r < 1 /2:\Theta(\mathcal C)-\tau \leq\Theta_{\hat M_2^i} \left(\frac{r}{4}, 0\right)\leq \frac{3}{2}\Theta(\mathcal C) \right\}.$$

Observe that $\tilde r_2^i$ need not be the same as $r_2^i$, but we have
$$ 0\leq r_1^i\leq r_2^i \leq \tilde r_2^i.$$
We have translated so that $\hat M_1^i, \hat M_2^i$ are minimal hypersurfaces in the ball $B_1(x^i_1)$, with the same boundary condition on $\partial B_1(x^i_1)$. Recall  that $x^i_1 \to 0$ as $i\to\infty$. 

\begin{lemma}\label{lem:graph}
Given $\epsilon > 0$ we can choose $\tau = \tau(\epsilon, \mathcal{C}) > 0$ above, so that for sufficiently large $i$ (depending on $\epsilon$ as well) on the annular region $B_1(x_1^i)\setminus B_{\tilde{r}^i_2}$ both $\hat{M}^i_1, \hat{M}^i_2$ are graphs of $v_1, v_2$ over $\mathcal{C}$, and $\Vert v_j\Vert_{C^{2,\alpha}_{1}} < \epsilon$. If $\tilde{r}_2^i=0$, then the graphicality extends down to the origin.
\end{lemma}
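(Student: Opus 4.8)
The plan is to apply Theorem~\ref{prel:thm:Density} to each $\hat M^i_j$ with center at the origin, after first checking its hypotheses hold on the annulus $B_1(x^i_1)\setminus B_{1/2}$. Since $x^i_1\to 0$, for large $i$ the ball $B_1(x^i_1)$ contains $B_{1-\eta}$ and is contained in $B_{1+\eta}$ for small $\eta$; combined with Proposition~\ref{prop:FromAllard}, which already gives that on $B_1\setminus B_{r_0}$ each $M^i_j$ (hence each $\hat M^i_j$, after a translation by $x^i_1\to 0$) is a spherical graph over $\mathcal{C}$ of a function with small $C^{2,\alpha}_1$-norm, we get that on the annulus $B_1(x^i_1)\setminus B_{1/2}$ the surface $\hat M^i_j = \mathrm{graph}_{\mathcal{C}}(u)$ with $\Vert u\Vert_{C^2(B_1\setminus B_{1/2})}$ as small as we like, in particular $\leq \delta(\epsilon,\mathcal{C})$. (A minor technical point: Theorem~\ref{prel:thm:Density} is stated for a varifold in $B_1$ centered at $0$, while $\hat M^i_j$ lives in $B_1(x^i_1)$; since $x^i_1\to0$ we may either rescale/recentre by a factor $1+o(1)$, absorbing the distortion into the small constants, or invoke the obvious translated/dilated version of the theorem.)

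Next I would verify the two density conditions of Theorem~\ref{prel:thm:Density} with $r_0 = \tilde r^i_2$ and center $0$. The upper bound $\Theta_{\hat M^i_j}(1,0)\leq \Theta(\mathcal{C}) + \tau$ follows from the mass bound $\Theta_{M^i_j}(1,0) < \tfrac32\Theta(\mathcal{C})$ together with \eqref{eq:Thetalim}: since $\Theta_{\hat M^i_j}(1,0)\to \Theta_{\mathcal C}(1,0) = \Theta(\mathcal C)$ as $i\to\infty$, for large $i$ this quantity is below $\Theta(\mathcal{C})+\tau$ for \emph{any} fixed $\tau>0$. For the lower bound at the small scale, $\Theta_{\hat M^i_j}(\tilde r^i_2/4, 0)\geq \Theta(\mathcal C)-\tau$, I use the definitions: for $j=1$ this is immediate since $r^i_1 \leq \tilde r^i_2$ and $r^i_1$ is precisely the infimum of scales $r$ with $\Theta_{\hat M^i_1}(r/4,0)\geq \Theta(\mathcal C)-\tau$ (monotonicity of $r\mapsto\Theta_{\hat M^i_1}(r/4,0)$ then gives the inequality for all $r \geq r^i_1$, in particular $r = \tilde r^i_2$); for $j=2$ it is the definition of $\tilde r^i_2$ itself, again using monotonicity to pass from the infimum to all larger scales. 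The upper density bound $\Theta_{\hat M^i_j}(r/4,0)\leq \tfrac32\Theta(\mathcal C)$ at these scales holds by the mass bound and monotonicity. Thus Theorem~\ref{prel:thm:Density} applies and yields that $\hat M^i_j \cap (B_1\setminus B_{\tilde r^i_2})$ (equivalently $B_1(x^i_1)\setminus B_{\tilde r^i_2}$, modulo the $o(1)$ recentering) is $\mathrm{graph}_{\mathcal C}(v_j)$ with $\Vert v_j\Vert_{C^2_1} \leq \epsilon$, after choosing $\tau = \tau(\epsilon,\mathcal C)$ as in that theorem. The upgrade from the $C^2_1$ bound to the $C^{2,\alpha}_1$ bound is a standard interior Schauder estimate for the minimal surface equation applied on each dyadic annulus and rescaled, exactly as at the end of the proof of Proposition~\ref{prop:FromAllard}. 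Finally, when $\tilde r^i_2 = 0$ the same argument applies with $r_0$ taken arbitrarily small, so the graphical representation extends down to the origin.

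The main obstacle I anticipate is purely bookkeeping rather than conceptual: keeping careful track of the fact that $\hat M^i_j$ is a varifold in the translated ball $B_1(x^i_1)$ and not in $B_1(0)$, so that when invoking Theorem~\ref{prel:thm:Density} (stated at the origin in $B_1$) one must either rescale by $1/(1+|x^i_1|)$ or so — which perturbs $\Theta(\cdot,0)$ and the norm of the graphing function by $o(1)$ — or carry through a translated version of the theorem. Since $x^i_1\to 0$, all such distortions are absorbable into the freedom in choosing $\tau$ and $i$ large, but one has to say this cleanly. A secondary point requiring care is that one must confirm $\hat M^i_2$ is genuinely a minimal hypersurface (stationary integral varifold) in all of $B_1(x^i_1)$, which it is because translation preserves stationarity and the original $M^i_2$ was stationary in $B_1(0)\supset B_{1-\eta}(0) \supset B_1(x^i_1)$... here one must note $B_1(x^i_1)\not\subset B_1(0)$; in fact it is cleaner to only claim stationarity, and hence apply the theorem, on the slightly smaller ball $B_{1-|x^i_1|}(x^i_1)\subset B_1(0)$, and then note that shrinking the outer radius from $1$ to $1-|x^i_1| = 1-o(1)$ is again harmless after rescaling.
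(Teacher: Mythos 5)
Your proposal is correct and follows essentially the same route as the paper's proof: use Proposition~\ref{prop:FromAllard} (and the convergence $M^i_j\to\mathcal{C}$, $x^i_1\to 0$) to obtain small-$C^2$ graphicality on an outer annulus, then invoke Theorem~\ref{prel:thm:Density} with $r_0 = \tilde r^i_2$, verifying the density hypotheses directly from the definitions of $r^i_1$, $\tilde r^i_2$ and monotonicity. The paper's version is terser, but the content, including the way the choice of $\tau(\epsilon,\mathcal{C})$ is dictated by Theorem~\ref{prel:thm:Density} and the handling of the $\tilde r^i_2 = 0$ case, matches what you wrote; the translation/recentering bookkeeping you flag is real but, as you note, harmless since $x^i_1\to 0$.
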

\begin{proof}
 This follows from Proposition~\ref{prop:FromAllard} and Theorem~\ref{prel:thm:Density}. Let $\epsilon$ be small. Since $x_1^i\to 0$ and $M^i_j \to \mathcal{C}\cap B_1$, for sufficiently large $i$ we can ensure that $\hat M_1^i$ is the graph of $v_1^i$ over $\mathcal C$ in $B_{1-\epsilon}\backslash B_{1/4}$ with $\|v_1^i\|_{C^2(B_{1/2}\backslash B_{1/4})}\to 0$. From  Theorem \ref{prel:thm:Density} and the definition of $\tilde{r}_2^i$ we obtain the required graphicality of $\hat{M}^i_{1,2}$ over $\mathcal{C}$ on $B_{1-\epsilon} \setminus B_{\tilde{r}_2^i}$. The graphicality near the boundary of $B_1(x^i_1)$ follows from Proposition~\ref{prop:FromAllard}.
\end{proof}

In particular on the annulus $B_1(x^i_1) \setminus B_{r^i_2}$ we can view $\hat{M}^i_2$ as the graph of a function $u_i$ over $\hat{M}^i_1$ for large $i$. 
We choose $\epsilon, \rho_0$ small such that the three annulus Lemma~\ref{prelim:ThreeAnnulusLemma} applies to this function, and then choose $\tau(\epsilon, \mathcal C)$ accordingly and define $\tilde r_2^i$ (and $r_2^i$ and $r_1^i$) with this $\tau$. We can assume that $\rho_0$ is small such that  $B_{\rho_0} \setminus B_{\rho_0^2}$ is contained in $B_{1-\epsilon}\setminus B_{\tilde{r}^i_2}$. 

Let us define
\[ k_i\coloneqq \inf\{ k\,:\,  \|u_i\|_{\rho_0,  k+1}\geq \rho_0^{\frac{2-n}{2}}\|u_i\|_{\rho_0,k}, \text{ where } k\geq 1, \rho_0^{k+2} > \tilde{r}^i_2 \}. \]
Here  $\|u_i\|_{\rho_0, k}$ are as in Lemma~\ref{prelim:ThreeAnnulusLemma}, and if no suitable $k$ exists, we set $k_i=\infty$. In other words, 
$\rho_0^{k_i}$ is the scale at which $\hat{M}^i_2$ starts diverging  from $\hat{M}^i_1$ at rate at least $(2-n)/2 < \gamma$ as $r\to 0$. Note that by Lemma~\ref{prelim:ThreeAnnulusLemma} we have 
\begin{equation} \label{eq:uigrowth}
\|u_i\|_{\rho_0,  k+1}\geq \rho_0^{\frac{2-n}{2}}\|u_i\|_{\rho_0,k}, 
\end{equation}
for all $k\geq k_i$, as long as $\rho_0^{k+3} > \tilde{r}^i_2$.

We will next show that the sequence of $k_i$ can be assumed to be uniformly bounded as $i\to\infty$. 
\begin{lemma}\label{lemma:k_i bounded}
    If the $M^i_1, M^i_2$ are distinct, then 
    there is a constant $K > 0$ such that $k_i < K$ for all sufficiently large $i$. 
\end{lemma}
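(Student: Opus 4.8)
The plan is to argue by contradiction. Suppose $M^i_1\neq M^i_2$ for all $i$, but that after passing to a subsequence the $k_i$ are unbounded; concretely, for each fixed $m$ the reverse of \eqref{eq:uigrowth}, namely $\Vert u_i\Vert_{\rho_0,k+1}<\rho_0^{\frac{2-n}{2}}\Vert u_i\Vert_{\rho_0,k}$, holds for all $1\le k\le m$ once $i$ is large enough (this includes the case $k_i=\infty$). Recall from Lemma~\ref{lem:graph} that on $B_1(x^i_1)\setminus B_{\tilde r^i_2}$ the surface $\hat M^i_2$ is the spherical graph of $u_i$ over $\hat M^i_1$ with $\Vert u_i\Vert_{C^{2,\alpha}_1}$ small, and $u_i=0$ on $\partial B_1(x^i_1)$ since the two surfaces share the same boundary. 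Since minimal hypersurfaces are real-analytic, $M^i_1\neq M^i_2$ forces $u_i\not\equiv 0$ on every open subset of its domain; in particular $\Vert u_i\Vert_{\rho_0,1}>0$, so we may normalise $\tilde u_i:=u_i/\Vert u_i\Vert_{\rho_0,1}$, giving $\Vert\tilde u_i\Vert_{\rho_0,1}=1$.

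Now $\tilde u_i$ solves a linear elliptic equation $\mathcal L_i\tilde u_i=0$, where $\mathcal L_i=\int_0^1 D\mathcal M_{\hat M^i_1}(t u_i)\,dt$ is the usual linearisation of the minimal surface equation satisfied by $\hat M^i_2$ as a graph over $\hat M^i_1$; since $\hat M^i_1$ is a small $C^{2,\alpha}_1$-graph over $\mathcal C$ (Lemma~\ref{lem:graph}) and $u_i$ is small in $C^{2,\alpha}_1$, the operator $\mathcal L_i$ is uniformly elliptic with uniformly bounded coefficients on each dyadic annulus after rescaling, and converges to $\mathcal L_{\mathcal C}$ on compact subsets of $\mathcal C\cap B_1\setminus\{0\}$. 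Iterating the displayed inequality from $k=1$ gives $\Vert\tilde u_i\Vert_{\rho_0,m}\le\rho_0^{\frac{(m-1)(2-n)}{2}}$ for all admissible $m$, and combining these weighted $L^2$-bounds with rescaled interior Schauder estimates produces uniform weighted $C^{2,\alpha}$-bounds for $\tilde u_i$ on $B_{\rho_0}\setminus B_{\rho_0^{k_i}}$. Near the outer sphere, since $g_i\to 0$ and $x^i_1\to 0$, Allard's boundary regularity theorem makes $\hat M^i_j$ a $C^{1,\alpha}$-graph over $\mathcal C$ up to $\partial B_1(x^i_1)$ with graph function tending to $0$; using that the minimizing quadratic cone $\mathcal C$ is strictly stable — so that $\mathcal L_{\mathcal C}$, hence $\mathcal L_i$ for large $i$, is invertible on $\mathcal C\cap(B_1\setminus B_{\rho_0^2})$ with zero Dirichlet data on $\partial B_1$ — we bound $\Vert\tilde u_i\Vert_{H^1}$ on $B_1(x^i_1)\setminus B_{\rho_0^2}$ in terms of the already-controlled trace of $\tilde u_i$ near $\partial B_{\rho_0^2}$.

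A diagonal subsequence then yields a limit $w$ on $\mathcal C\cap B_1\setminus\{0\}$, with $\tilde u_i\to w$ smoothly on compact subsets of the interior and in $C^{1,\alpha}$ up to $\partial B_1$. Hence $\mathcal L_{\mathcal C}w=0$, $w=0$ on $\Sigma$, and $\Vert w\Vert_{\rho_0,1}=1$, so $w\not\equiv 0$; passing the iterated bounds to the limit gives $\Vert w\Vert_{\rho_0,m}\le\rho_0^{\frac{(m-1)(2-n)}{2}}$ for every $m\ge 1$. Writing $w$ in the form \eqref{eqn: u} and using the scaling of $\Vert\cdot\Vert_{\rho_0,m}$ on the homogeneous modes $r^{\gamma_i^\pm}\phi_i$, these bounds force $c_i^-=0$ for all $i$: every $r^{\gamma_i^-}$-mode has $\gamma_i^-<\frac{2-n}{2}$, so its norm $\Vert\cdot\Vert_{\rho_0,m}$ grows strictly faster than $\rho_0^{\frac{m(2-n)}{2}}$ as $m\to\infty$, and there are no homogeneous Jacobi fields of rate exactly $\frac{2-n}{2}$. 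Therefore $w=\sum_i c_i^+ r^{\gamma_i^+}\phi_i$ and $|w(x)|=O(|x|^{\gamma})=O(|x|^{\frac{2-n}{2}})$ near the origin. This is exactly the first configuration listed in the introduction, so Lemma~\ref{prelim:GammaBound}(2) forces $w\equiv 0$, contradicting $\Vert w\Vert_{\rho_0,1}=1$. Hence the $k_i$ are bounded.

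I expect the main obstacle to be the uniform control of $\tilde u_i$ up to the outer sphere $\partial B_1(x^i_1)$, which is what guarantees that the limit $w$ genuinely satisfies the Dirichlet condition $w=0$ on $\Sigma$ needed for Lemma~\ref{prelim:GammaBound}(2); this is where strict stability of $\mathcal C$ enters, via invertibility of $\mathcal L_{\mathcal C}$ with zero boundary data. One must also check that the normalisation by $\Vert u_i\Vert_{\rho_0,1}$ does not degenerate, which is precisely the role of the hypothesis $M^i_1\neq M^i_2$ together with unique continuation for minimal hypersurfaces.
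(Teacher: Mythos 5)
Your proof is essentially correct and follows the same overall strategy as the paper: argue by contradiction, normalize $u_i$, extract a limiting Jacobi field $w$ on $\mathcal{C}\cap B_1$ vanishing on $\Sigma$ with $|w(x)| \lesssim |x|^{\frac{2-n}{2}}$, and invoke Lemma~\ref{prelim:GammaBound}(2). The genuine difference is the normalization. You divide by $\Vert u_i\Vert_{\rho_0,1}$, the weighted $L^2$-norm on the first inner annulus. This means the outer region $B_1(x^i_1)\setminus B_{\rho_0}$ is not automatically under control, and you must supply a separate argument: you invoke strict stability of $\mathcal{C}$ to show the Dirichlet problem for $\mathcal{L}_{\mathcal{C}}$ (hence for the nearby operators $\mathcal{L}_i$) on $\mathcal{C}\cap(B_1\setminus B_{\rho_0^2})$ with zero data on $\partial B_1$ is invertible, and use the trace near $\partial B_{\rho_0^2}$ (controlled by $\Vert\tilde u_i\Vert_{\rho_0,1}$ and $\Vert\tilde u_i\Vert_{\rho_0,2}$) to bound $\tilde u_i$ up to the outer sphere. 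This step is correct — the kernel of the annular Dirichlet problem is trivial because the strictly stable cone has $\gamma_i^+\neq\gamma_i^-$ for every mode — but it is extra work. The paper sidesteps it entirely by normalizing by $\Vert u_i\Vert_{L^2(\hat M_1^i\cap B_1(x_1^i)\setminus B_{\rho_0^2})}$: the outer region is then bounded by construction, the elementary inequality $\Vert u_i\Vert_{\rho_0,1}\le C(\rho_0)\Vert u_i\Vert_{L^2}$ seeds the iteration \eqref{eqn: M_j distinct}, and interior plus boundary Schauder estimates do the rest. Your final derivation of $c_i^-=0$ directly from the iterated $L^2$ bounds is also a slight redundancy: once you have the pointwise decay $|w|\le C|x|^{\frac{2-n}{2}}$ together with $w=0$ on $\Sigma$, Lemma~\ref{prelim:GammaBound}(2) already contains the mode-by-mode argument. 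Your use of analyticity/unique continuation to guarantee $\Vert u_i\Vert_{\rho_0,1}>0$ is a reasonable justification of the paper's implicit assumption $u_i\neq 0$.
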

\begin{proof}
We argue by contradiction. Suppose that $k_i\rightarrow \infty$ ($\rho_0^{k_i}\rightarrow 0$) or $k_i=\infty$ along a subsequence. In either scenario we have a sequence of positive integers $l_i\rightarrow \infty$ such that 
For all $0\leq k<l_i$, 
\begin{equation}\label{eqn: M_j distinct}
 \|u_i\|_{\rho_0, k+1}< \rho_0^{\frac{2-n}{2}}\|u_i\|_{\rho_0, k}<\rho_0^{k(\frac{2-n}{2})}\|u_i\|_{\rho_0, 1}.
\end{equation}
We assume $u_i\neq 0$ and define $u_i^o:\hat M_1^i\cap B_1(x_1^i)\backslash B_{\rho_0^{k_i}}\rightarrow \mathbb R$ as
$$ u_i^o\coloneqq\frac{u_i}{\|u_i\|_{L^2(\hat M_1^i\cap B_1(x_1^i)\backslash B_{\rho_0^2})}}. $$

We claim that there exists a subsequence such that $u_i^o \rightarrow w$, where $w$ is a Jacobi field on $\mathcal C \cap B_1$, with $w=0$ on $\Sigma$ and $|w(x)| \leq C |x|^{\frac{2-n}{2}}$. Here the convergence is $C^{2,\alpha}$ on compact subsets of $\mathcal{C}\cap (B_1\setminus \{0\})$, up to the boundary $\partial B_1$, and in fact smooth on the interior. In order to show this, we will show that for large $i$ the $u^o_i$ satisfy uniform $C^{k,\alpha}$ estimates on any annulus $B_1(x^i_1)\setminus B_r$. Recall that from Proposition~\ref{prop:FromAllard} we know that as $i\to \infty$, the $\hat{M}^i_j$ restricted to any such annulus converge smoothly to $\mathcal{C}\cap (B_1\setminus B_r)$. 

To see this, note that
\[ \Vert u_i\Vert_{\rho_0,1} \leq C(\rho_0) \Vert u_i\Vert_{L^2(\hat{M}^i_1\cap B_1(x^i_1))\setminus B_{\rho_0^2}}. \]
From \eqref{eqn: M_j distinct} we then find that
\begin{equation}\label{eq:L2bound} \Vert u^o_i\Vert_{\rho_0, k+1} < C(\rho_0) \rho_0^{k(2-n)/2}, 
\end{equation}
for $k < l_i$. 

On any fixed annulus $B_1(x^i_1)\setminus B_r$ we can write the equation for $u_i$ as $\mathcal{M}_{\hat{M}_1^i}(u_i)=0$, and $u_i=0$ on $\partial B_1(x^i_1)$. In other words
\begin{equation}\label{eq:M=0} \mathcal{L}_{\hat{M}^i_1}(u_i) + Q_{\hat{M}^i_1}(u_i) = 0. 
\end{equation}
Since on the annulus $\Vert u_i\Vert_{C^{2,\alpha}}\to 0$ as $i\to\infty$, and $Q$ contains quadratic or higher order terms, it follows that the equation can be viewed as a homogeneous linear equation for $u_i^o$ with $C^\alpha$ coefficients (which depend on $u_i$). The interior and boundary Schauder estimates, together with the $L^2$ bounds \eqref{eq:L2bound}, imply that the $u_i^o$ satisfy uniform $C^{k,\alpha}$ bounds up to the boundary on any annulus $B_1(x^i_1)\setminus B_r$. In particular, up to choosing a subsequence, and letting $r\to 0$, we can extract a limit $w = \lim_{i\to\infty} u^o_i$ on $\mathcal{C}\cap B_1\setminus \{0\}$. Letting $i\to \infty$ in Equation \eqref{eq:M=0} implies that  $\mathcal{L}_{\mathcal{C}}(w)=0$, while the convergence along the boundary implies $w=0$ on $\Sigma$. 

Using the interior estimates, from \eqref{eq:L2bound} we can deduce that  $|w(x)|\leq C|x|^{\frac{2-n}{2}}.$ By Lemma~\ref{prelim:GammaBound} this leads to  $w\equiv 0$ which is a contradiction. 
\end{proof}

We can therefore assume from now that $k_i < K$ for all $i$. 
\begin{lemma}
    If the $M^i_1, M^i_2$ are distinct, then 
    there cannot be infinitely many $i$ such that $\tilde{r}^i_2=0$.
\end{lemma}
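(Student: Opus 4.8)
The plan is to play the graphicality of $\hat M^i_1,\hat M^i_2$ over $\mathcal C$ against the growth forced by the three annulus lemma. Assume for contradiction that $\tilde r^i_2=0$ along a subsequence, which we do not relabel. Since $\tilde r^i_2=0$, Lemma~\ref{lem:graph} gives that $\hat M^i_1$ and $\hat M^i_2$ are graphs over $\mathcal C$ on the whole punctured ball $B_1(x^i_1)\setminus\{0\}$, with weighted $C^{2,\alpha}_1$-norm $<\epsilon$; in particular the function $u_i$ representing $\hat M^i_2$ as a graph over $\hat M^i_1$ is defined on all of $\hat M^i_1\cap B_1(x^i_1)\setminus\{0\}$, is not identically zero since $M^i_1\neq M^i_2$, and satisfies $|u_i(x)|\le C|x|$ for a uniform constant $C$.

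First I would convert this pointwise bound into an upper bound for the scaled $L^2$-norms. Using the mass bound $\Theta_{\hat M^i_1}(r,0)\le\frac{3}{2}\Theta(\mathcal C)$, valid for all small $r$ and large $i$ by \eqref{eq:Thetalim} and monotonicity, so that $\mu_{\hat M^i_1}(B_{\rho_0^k})\le C'\rho_0^{kn}$, a direct computation gives $\|u_i\|_{\rho_0,k}\le C''\rho_0^{k}$ for every $k$. Next I would record the opposing lower bound: because $\tilde r^i_2=0$, the side condition $\rho_0^{k+3}>\tilde r^i_2$ in \eqref{eq:uigrowth} is vacuous, and since $k_i<K$ by Lemma~\ref{lemma:k_i bounded}, the estimate \eqref{eq:uigrowth} holds for \emph{all} $k\ge K$. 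As $u_i$ is a nonzero solution of the quasilinear elliptic equation $\mathcal M_{\hat M^i_1}(u_i)=0$, unique continuation gives $\|u_i\|_{\rho_0,K}>0$, and iterating \eqref{eq:uigrowth} then yields $\|u_i\|_{\rho_0,K+m}\ge\rho_0^{m\frac{2-n}{2}}\|u_i\|_{\rho_0,K}$ for every $m\ge0$.

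Comparing the two estimates at scale $k=K+m$ gives $\rho_0^{m\frac{2-n}{2}}\|u_i\|_{\rho_0,K}\le C''\rho_0^{K+m}$, i.e. $\rho_0^{-mn/2}\le C''\rho_0^{K}/\|u_i\|_{\rho_0,K}$. Since $0<\rho_0<1$, the left side diverges as $m\to\infty$ while the right side is a fixed finite number, which is the desired contradiction. The step needing the most care is ensuring that both estimates are genuinely valid at all arbitrarily small scales with constants independent of $m$: the upper bound relies on $\hat M^i_1,\hat M^i_2$ being graphical over $\mathcal C$ all the way down to the origin, which is precisely what $\tilde r^i_2=0$ provides via Lemma~\ref{lem:graph}, and the lower bound relies on the side condition in \eqref{eq:uigrowth} being vacuous, which is again a consequence of $\tilde r^i_2=0$. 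The remaining ingredients — the density bound for $\hat M^i_1$, the bookkeeping with the weighted norms, and unique continuation — are routine.
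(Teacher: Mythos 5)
Your argument is correct and is essentially the paper's proof, written out in full detail: both play the pointwise bound $|u_i(x)|\lesssim |x|$ coming from graphicality all the way to the origin against the $\rho_0^{k(2-n)/2}$ growth forced by iterating the three annulus lemma at every scale once the side condition $\rho_0^{k+3}>\tilde r^i_2$ becomes vacuous. The paper states the contradiction tersely; you additionally supply the minor observation (via unique continuation, or equivalently by starting the iteration at the first annulus where the norm is positive) that $\|u_i\|_{\rho_0,K}>0$ when $u_i\not\equiv 0$, which the paper leaves implicit.
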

\begin{proof}
    If $\tilde{r}^i_2 = 0$, then we have that both $\hat M^i_1$ and $\hat M^i_2$ are graphical over $\mathcal{C}$ in $B_{1-\epsilon}$. In particular $\hat M^i_2$ is the graph of $u_i$ over $\hat M^i_1$ on $B_{1-\epsilon}$. Moreover we must have $|u_i(x)| < \epsilon |x|$ for all $|x| < 1-\epsilon$, for sufficiently large $i$. This contradicts the fact that we have the growth property \eqref{eq:uigrowth} for all $k > K$, unless $u_i=0$.  
\end{proof}

Since $\tilde{r}^i_2 \to 0$, and $k_i < K$, we now have
$(\tilde{r}^i_2)^{-1}\rho_0^{k_i}
\rightarrow \infty$.
Let $N_i$ be the largest positive integer such that, 
$$\rho_0^{N_i}\geq \tilde r_2^i > \rho_0^{N_i+1}.$$
Applying \eqref{eq:uigrowth}, we know that for any $k > K$ with $k\leq N_i-1$ we have
\begin{equation}\label{eq:uigrowth2}
\|u_i\|_{\rho_0,k+1}\geq \rho_0^{\frac{2-n}{2}} \|u_i\|_{\rho_0, k}. 
\end{equation}

We rescale both $\hat M_1^i$, $\hat M_2^i$ by $\rho_0^{N_i}$, i.e we define
$$ \widetilde M_j^i\coloneqq\frac{1}{\rho_0^{N_i}}\hat{M_j^i}, \quad j=1, 2. $$
We have the following. 
\begin{prop}\label{prop:Milimit} There are minimal hypersurfaces $\widetilde M_1^\infty, \widetilde M_2^\infty \subset \mathbb R^{n+1}$ such that for a subsequence as $i\to\infty$, we have
\begin{eqnarray*}
    \widetilde M_j^i \rightarrow \widetilde M_j^\infty, \: j=1, 2,
\end{eqnarray*}
as varifolds.
In addition $\widetilde M_j^\infty=a_j+Q_j(S_{\lambda_j})$ for some $ (a_j, Q_j, \lambda_j) \in \mathbb R^{n+1}\times SO(n+1)\times \mathbb R$.
The convergence outside of $B_1$ is in the locally smooth sense. In addition if $\lambda_1=0$, then $a_1=0$, that is $\widetilde M^\infty_1$ cannot be a non-trivial translation of a cone. 
\end{prop}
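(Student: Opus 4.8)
The plan is to extract subsequential limits by varifold compactness, to identify them via the Simon--Solomon rigidity theorem, and then to exploit the way the centering points $x_1^i$ were chosen in order to rule out a non-trivial translation in the conical case. First I would extract the limits: by the monotonicity formula and the hypothesis $\Theta_{M_j^i}(1,0)<\tfrac32\Theta(\mathcal C)$, on any fixed ball $B_R$ the rescaled hypersurfaces $\widetilde M_j^i=\rho_0^{-N_i}\hat M_j^i$ carry a uniform mass bound for large $i$, since $\Theta_{\widetilde M_j^i}(R,0)=\Theta_{\hat M_j^i}(\rho_0^{N_i}R,0)\le\Theta_{\hat M_j^i}(\tfrac12,0)$ once $\rho_0^{N_i}R<\tfrac12$, and the right-hand side stays below $\tfrac32\Theta(\mathcal C)$ as $\hat M_j^i\to\mathcal C$. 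Because $\rho_0^{N_i}\to 0$ while $|x_1^i|<\tfrac12$, the balls on which $\widetilde M_j^i$ is stationary cover any compact set for $i$ large, so along a common subsequence $\widetilde M_j^i\to\widetilde M_j^\infty$ ($j=1,2$), a stationary integral varifold in $\mathbb R^{n+1}$. Since $N_i$ was chosen with $\rho_0^{-N_i}\tilde r_2^i\in(\rho_0,1]$, rescaling Lemma~\ref{lem:graph} shows $\widetilde M_j^i$ is $\mathrm{graph}_{\mathcal C}(v_j^i)$ over $\mathcal C$ on a region exhausting $\mathbb R^{n+1}\setminus\overline B_1$ as $i\to\infty$, with $\|v_j^i\|_{C^{2,\alpha}_1}<\epsilon$; passing to the limit and bootstrapping the quasilinear minimal surface equation with interior Schauder estimates yields $\widetilde M_j^\infty\setminus B_1=\mathrm{graph}_{\mathcal C}(v_j)$ with $\|v_j\|_{C^{2,\alpha}_1}\le\epsilon$, the convergence being smooth on compact subsets of $\mathbb R^{n+1}\setminus\overline B_1$; this is the asserted exterior smooth convergence.

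\emph{Identification of $\widetilde M_j^\infty$.} Next I would analyze the tangent cone of $\widetilde M_j^\infty$ at infinity. Dilating $R^{-1}\widetilde M_j^\infty$ and using scale-invariance of the weighted norm, this tangent cone is $\mathrm{graph}_{\mathcal C}(w)$ for some $1$-homogeneous $w$ with $\|w\|_{C^{2,\alpha}_1}\le\epsilon$. For $\epsilon$ small, the trace $\psi=w|_\Sigma$ solves a small perturbation of $\Delta_\Sigma\psi+2(n-1)\psi=0$, whose solutions are exactly the infinitesimal rotations $\langle X\omega,\nu_{\mathcal C}(\omega)\rangle$, $X\in\mathfrak{so}(n+1)$; since the rotated cones $Q\mathcal C$ integrate all of these (integrability of quadratic cones with respect to their $1$-homogeneous Jacobi fields), the tangent cone at infinity is $Q_j\mathcal C$ for some $Q_j\in SO(n+1)$ close to the identity. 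Replacing $\widetilde M_j^\infty$ by $Q_j^{-1}\widetilde M_j^\infty$, whose tangent cone at infinity is $\mathcal C$, and noting $\widetilde M_j^\infty$ has density $\le\Theta(\mathcal C)$ everywhere (monotonicity together with density $\Theta(\mathcal C)$ at infinity) and is smooth and graphical over $\mathcal C$ off a compact set, the Simon--Solomon classification (Theorem~\ref{Theo: Simon-Solomon}) gives $Q_j^{-1}\widetilde M_j^\infty=a_j'+S_{\lambda_j}$ for some $a_j'\in\mathbb R^{n+1}$, $\lambda_j\in\mathbb R$. Setting $a_j=Q_ja_j'$ yields $\widetilde M_j^\infty=a_j+Q_j(S_{\lambda_j})$, which in particular is a smooth minimal hypersurface (with an isolated conical singularity when $\lambda_j=0$).

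\emph{Ruling out a non-trivial translated cone.} Suppose $\lambda_1=0$ but $a_1\ne 0$, so $\widetilde M_1^\infty=a_1+Q_1\mathcal C$ is a cone with vertex $a_1\ne 0$, where its density equals $\Theta(\mathcal C)$ at every scale. By varifold convergence $\Theta_{\widetilde M_1^i}(r,a_1)\to\Theta(\mathcal C)$ for all $r>0$, so for $i$ large $\widetilde M_1^i$ is conical at the point $a_1$ down to arbitrarily small scales. Translating and rescaling back, the scale $r_1^i(\cdot)$ at the point $x_1^i+\rho_0^{N_i}a_1$, which lies in $B_{1/2}$ for $i$ large, is $\rho_0^{N_i}o(1)$ as $i\to\infty$, so minimality of $r_1^i=\inf_{x\in B_{1/2}}r_1^i(x)$ in \eqref{main:r_1} forces $\rho_0^{-N_i}r_1^i\to 0$. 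But $\rho_0^{-N_i}r_1^i$ is precisely the conical scale of $\widetilde M_1^i$ at the origin, so by the monotonicity formula $\Theta_{\widetilde M_1^i}(r/4,0)\ge\Theta(\mathcal C)-\tau$ for every fixed $r>0$ and all $i$ large; letting $i\to\infty$ gives that $\widetilde M_1^\infty$ has density at least $\Theta(\mathcal C)-\tau>1$ at the origin, provided $\tau$ was fixed below $\Theta(\mathcal C)-1$ at the outset. This contradicts that the origin is either off the cone $a_1+Q_1\mathcal C$ or a smooth point of it, where the density is at most $1$. Hence $a_1=0$.

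The step I expect to be the main obstacle is the identification: guaranteeing that the tangent cone of $\widetilde M_j^\infty$ at infinity is a rotation of $\mathcal C$ rather than some other minimal cone near $\mathcal C$, and that $\widetilde M_j^\infty$ is regular enough for the Simon--Solomon classification to apply. Both rest on the special features of quadratic cones --- the integrability of their $1$-homogeneous Jacobi fields and the full classification of minimal hypersurfaces asymptotic to them --- which is exactly the ingredient singled out in the introduction as the reason the method is confined to quadratic cones. The translational rigidity in the last step is also somewhat delicate, being a genuine interplay between the choice of $x_1^i$, the scale $\rho_0^{N_i}$, and the monotonicity formula.
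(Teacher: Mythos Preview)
Your proof is correct and follows the same strategy as the paper's own argument: mass bounds plus Allard compactness to extract limits, rescaled graphicality from Lemma~\ref{lem:graph} to get smooth convergence outside $B_1$, integrability of $\mathcal C$ to identify the tangent cone at infinity as a rotate $Q_j\mathcal C$, and then Simon--Solomon to conclude $\widetilde M_j^\infty=a_j+Q_j(S_{\lambda_j})$. For the last claim the paper is extremely terse --- it only records that $\Theta_{\widetilde M_1^\infty}(r,a_1)=\Theta(\mathcal C)$ for all $r>0$ and says this contradicts the choice of $x_1^i$ --- whereas you spell out the mechanism: the competitor point $x_1^i+\rho_0^{N_i}a_1$ forces $\rho_0^{-N_i}r_1^i\to 0$, which pushes density $\ge\Theta(\mathcal C)-\tau$ at the origin down to all scales in the limit, impossible at a smooth (or off-support) point of a translated cone when $\tau<\Theta(\mathcal C)-1$. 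This is a faithful unpacking of the paper's hint rather than a different route.
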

\begin{proof}
From Lemma~\ref{lem:graph}, $\hat M_1^i$ is the graph of $v_1^i$ over $\mathcal C$ in $B_{1-\epsilon}\backslash B_{\rho_0^{N_i}}$ and $\|v_1^i\|_{C^{2, \alpha}_1}\leq \epsilon $. Rescaling this we get that $\widetilde M_1^i$ is the graph of $\widetilde v_1^i$ over $\mathcal C$ in $B_{\rho_0^{N_i}(1-\epsilon)}\backslash B_1$ and $\|\widetilde v_1^i\|_{C^{2,\alpha}_1}\leq \epsilon$. Since the mass of $\widetilde M_1^i$ is bounded in any ball, independently of $i$, Allard's compactness theorem implies that along a subsequence we have a varifold limit $\widetilde M_1^i \rightarrow \widetilde M_1^\infty$ and $\widetilde M_1^\infty$ is a graph of $v_1^\infty$ over $\mathcal C\backslash B_1$ with $\|v_1^\infty\|_{C^{2, \alpha}_1}\leq \epsilon$.
The tangent cone $\mathcal{C}'$ at infinity of $\widetilde M_1^\infty$ is then also $\epsilon$-graphical over $\mathcal{C}$, and so, using the integrability of $\mathcal{C}$ we have that $\mathcal{C}' = Q\mathcal{C}$ for a rotation $Q$, for sufficiently small $\epsilon$. Simon-Solomon's Theorem~\ref{Theo: Simon-Solomon} implies that 
$\widetilde M_j^\infty=a_j+Q_j(S_{\lambda_j})$ for some $ (a_j, Q_j, \lambda_j) \in \mathbb R^{n+1}\times SO(n+1)\times \mathbb R$. 

If $\lambda_1=0$, then $\widetilde M^\infty_1$ is a translation of a cone by $a_1$. That $a_1=0$ follows from the way we chose $x^i_1$ to achieve the infimum in \eqref{main:r_1}. Indeed, we have
\[ \Theta_{\widetilde M^\infty_1}(r, a_i) = \Theta(\mathcal{C}) \]
for all $r > 0$. 
\end{proof}

\begin{prop} \label{prop: distinct} 
    We  have $\widetilde M^\infty_1= \widetilde M^\infty_2$. 
\end{prop}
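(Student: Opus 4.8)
The plan is to argue by contradiction: assume $\widetilde M^\infty_1\neq\widetilde M^\infty_2$ and derive a contradiction by promoting the three annulus growth \eqref{eq:uigrowth2} to an $|x|^{(2-n)/2}$-decay statement for the limiting graph function, and then invoking the Simon--Solomon classification (Theorem~\ref{Theo: Simon-Solomon}) together with the asymptotics \eqref{graphS_lambda}. The first step is to identify that limiting graph function. By Lemma~\ref{lem:graph}, on $B_1(x^i_1)\setminus B_{\tilde r^i_2}$ both $\hat M^i_1,\hat M^i_2$ are $\epsilon$-graphs over $\mathcal{C}$, so $\hat M^i_2=\mathrm{graph}_{\hat M^i_1}(u_i)$; after rescaling by $\rho_0^{-N_i}$, and using $\rho_0<\rho_0^{-N_i}\tilde r^i_2\le 1$, this says $\widetilde M^i_2=\mathrm{graph}_{\widetilde M^i_1}(\tilde u_i)$ on a region exhausting $\{|x|>1\}$, where $\tilde u_i(x):=\rho_0^{-N_i}u_i(\rho_0^{N_i}x)$. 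By the locally smooth convergence in Proposition~\ref{prop:Milimit}, all of $\widetilde M^i_j,\widetilde M^\infty_j$ are graphs over $\mathcal{C}$ on $\{|x|>1\}$ and $\tilde u_i\to u_\infty$ in $C^2_{\mathrm{loc}}$ there, where $u_\infty$ is the graph function of $\widetilde M^\infty_2$ over $\widetilde M^\infty_1$. Since $\widetilde M^\infty_1,\widetilde M^\infty_2$ are connected minimal hypersurfaces, if $u_\infty$ vanished identically on $\{|x|>1\}$ they would agree on an open set, hence coincide by unique continuation; so under our assumption $u_\infty\not\equiv 0$.

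The second step transports the three annulus inequality to the limit. The scaling identity $\|u_i\|_{\rho_0,k}=\rho_0^{N_i}\|\tilde u_i\|_{\rho_0,k-N_i}$ turns \eqref{eq:uigrowth2} into $\|\tilde u_i\|_{\rho_0,j+1}\ge\rho_0^{(2-n)/2}\|\tilde u_i\|_{\rho_0,j}$ for $K-N_i\le j\le -2$; since $K-N_i\to-\infty$ and the relevant annuli all lie in $\{|x|>1\}$, letting $i\to\infty$ gives $\|u_\infty\|_{\rho_0,j+1}\ge\rho_0^{(2-n)/2}\|u_\infty\|_{\rho_0,j}$ for all sufficiently negative $j$. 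Iterating downward and combining with interior Schauder estimates for the equation $u_\infty$ solves on $\widetilde M^\infty_1$ (a quadratically small perturbation of $\mathcal{L}_{\widetilde M^\infty_1}$) yields $|u_\infty(x)|\le C|x|^{(2-n)/2}$ as $|x|\to\infty$; in particular $u_\infty\to 0$ at infinity. This is precisely the third alternative from the introduction, and it is the point at which the common boundary condition of $M^i_1,M^i_2$ enters, through the definitions of $k_i,N_i$ and Lemma~\ref{prelim:ThreeAnnulusLemma}.

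The third step is the rigidity argument. Since $u_\infty\to 0$, $\widetilde M^\infty_2$ is asymptotic to $\widetilde M^\infty_1$ at infinity, so their tangent cones agree, $Q_1\mathcal{C}=Q_2\mathcal{C}$; applying the ambient isometry $x\mapsto Q_1^{-1}(x-a_1)$ to the whole configuration (which changes the $|x|^{(2-n)/2}$ bound only by a constant) we may assume $\widetilde M^\infty_1=S_{\lambda_1}$ while $\widetilde M^\infty_2=a+R(S_{\lambda_2})$ with $R:=Q_1^{-1}Q_2$ a symmetry of $\mathcal{C}$; by the uniqueness in Theorem~\ref{thm:HardtSimon}, $R(S_{\lambda_2})$ equals $S_{\lambda_2}$ or $S_{-\lambda_2}$, call it $S_\lambda$. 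By \eqref{graphS_lambda} the graph of $S_\mu$ over $\mathcal{C}$ outside a large ball has leading term $\mathrm{sgn}(\mu)|\mu|^{-\gamma+1}r^\gamma$, and translating by $a$ adds a homogeneous degree-zero term equal, to leading order, to $\langle a,\nu_\mathcal{C}\rangle$; hence near infinity
\[
u_\infty=\bigl(\mathrm{sgn}(\lambda)|\lambda|^{-\gamma+1}-\mathrm{sgn}(\lambda_1)|\lambda_1|^{-\gamma+1}\bigr)r^\gamma+\langle a,\nu_\mathcal{C}\rangle+o(r^\gamma)+o(1).
\]
Since $\tfrac{2-n}{2}<\gamma<0$, both $r^\gamma$ and the degree-zero term dominate $|x|^{(2-n)/2}$, so the bound on $u_\infty$ forces the $r^\gamma$-coefficient to vanish, hence $\lambda=\lambda_1$ (the map $\mu\mapsto\mathrm{sgn}(\mu)|\mu|^{-\gamma+1}$ is strictly monotone, as $-\gamma+1>0$), and forces $\langle a,\nu_\mathcal{C}\rangle\equiv 0$ on $\Sigma$, hence $a=0$ (for a quadratic cone, $\langle a,\nu_\mathcal{C}\rangle\equiv 0$ on $\Sigma$ implies $a=0$). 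Thus $\widetilde M^\infty_2=S_{\lambda_1}=\widetilde M^\infty_1$, a contradiction; the case $\lambda_1=0$ (so $\widetilde M^\infty_1=\mathcal{C}$) is handled identically, reading $|\lambda_1|^{-\gamma+1}=0$ and concluding $\lambda=0$, $a=0$.

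I expect the main obstacle to be the second step: carrying the three annulus inequality correctly through the rescaling by $\rho_0^{-N_i}$ — keeping track of the direction of the inequality and of the fact that the relevant annuli remain outside $B_1$ in the limit — and, crucially, using that the varifold limits are \emph{distinct} to exclude the degeneration $\tilde u_i\to 0$, so that $u_\infty$ is a genuine nonzero object to which the asymptotics of the third step apply. A smaller technical point is verifying that translating a Hardt--Simon leaf perturbs its graph over $\mathcal{C}$ by an $O(1)$ term with angular profile $\langle a,\nu_\mathcal{C}\rangle$, which vanishes identically on $\Sigma$ only when $a=0$.
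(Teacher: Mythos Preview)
Your first two steps---extracting the graph function $u_\infty$ of $\widetilde M^\infty_2$ over $\widetilde M^\infty_1$ outside the unit ball and promoting \eqref{eq:uigrowth2} through the rescaling to the decay $|u_\infty(x)|\le C|x|^{(2-n)/2}$---coincide with what the paper does (its $U$ is your $u_\infty$). The divergence is in the rigidity step. The paper does not read off asymptotic coefficients; instead it translates and rotates so that $\widetilde M^\infty_1=S_{\lambda_1}$, sets $N=a+Q(S_{\lambda_2})$, and uses the Hardt--Simon leaves $S_\Lambda$ as barriers: since $S_\Lambda$ differs from $S_{\lambda_1}$ by $\sim r^\gamma$ and $\gamma>\tfrac{2-n}{2}$, the decay forces $N$ to lie on the correct side of every $S_\Lambda$ near infinity, and then the strong maximum principle of Solomon--White/Wickramasekera pins $N$ down to $S_{\lambda_1}$. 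Your route---matching the degree-zero coefficient $\langle a,\nu_\mathcal{C}\rangle$ and the $r^\gamma$-coefficient against the bound---is also valid and has the virtue of avoiding the strong maximum principle entirely. The paper's barrier argument, on the other hand, is more robust: it needs only the qualitative fact that distinct leaves separate at rate $r^\gamma$, sidesteps the explicit handling of the rotation $R$ and the verification that $\langle a,\nu_\mathcal{C}\rangle\equiv 0$ on $\Sigma$ forces $a=0$, and is insensitive to the precise form of the lower-order error terms.

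One small correction to your write-up: since $\gamma<-1$ for minimizing quadratic cones, the translation correction terms (of order $|a|/r$) actually decay \emph{slower} than $r^\gamma$, so your displayed expansion with ``$o(r^\gamma)+o(1)$'' is not literally correct before you know $a=0$. The fix is simply to argue in the right order: first use $u_\infty\to 0$ to kill the degree-zero piece $\langle a,\nu_\mathcal{C}\rangle$ (hence $a=0$), whereupon all $|a|$-dependent corrections disappear and the genuine leading term becomes $(c-c_1)r^\gamma+O(r^{\gamma-\epsilon})$, which you then compare to $r^{(2-n)/2}$ to conclude $\lambda=\lambda_1$.
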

\begin{proof}
Let us suppose that on the contrary, we have $\widetilde M_1^\infty\neq \widetilde M_2^\infty$. Note that outside of $B_1$, the hypersurface $\widetilde M^\infty_2$ is the graph of a function $U$ over $\widetilde M^\infty_1$, and moreover after rewriting the growth condition \eqref{eq:uigrowth2} for $U$ implies that for $k<-1$, 
\begin{equation} \label{proof: U Decay Eqn}
\Vert U\Vert_{\rho_0, k-1} \leq \rho_0^{\frac{n-2}{2}} \Vert U\Vert_{\rho_0, k}.
\end{equation}
It follows that $|U(x)| \leq C|x|^{\frac{2-n}{2}}$ for $|x| > 1$.  We can assume that $U\not\equiv 0$, since otherwise $\widetilde{M}^\infty_1 = \widetilde{M}^\infty_2$. 

Recall that $\widetilde M_j^\infty=a_j+Q_j(S_{\lambda_j})$. Translating by $a_1$ and rotating by $Q_1$ we find that for some $a, Q$, the hypersurface $N := a + Q(S_{\lambda_2})$ is the graph of a function $V$ over $S_{\lambda_1}$ outside of the ball $B_2$, and $V$ has the decay property $|V(x)| \leq C|x|^{\frac{2-n}{2}}$ for $|x| > 2$. We derive a contradiction from this using the surfaces $S_\Lambda$. 

For each $\Lambda \not=\lambda_1$ the hypersurface $S_\Lambda$ is the graph of a function $v$ over $S_{\lambda_0}$ outside of a large ball, where $|v(x)| \leq C_\Lambda |x|^\gamma$. Since $\gamma > \frac{2-n}{2}$, it follows that for any $\Lambda > \lambda_1$ the hypersurface $N$ lies on the negative side of $S_\Lambda$ near infinity. Similarly, for $\Lambda < \lambda_1$, $N$ lies on the positive side of $S_\Lambda$ near infinity. 

Let us define $\Lambda_+$ to be the infimum of all $\Lambda > \lambda_1$ such that $N$ lies on the negative side of $\Lambda$. If $\Lambda_+ > \lambda_1$, then $N$ must touch $S_{\Lambda_+}$ at some point. From the strong maximum principle (either Solomon-White~\cite{SolomonWhite} or Wickramasekera~\cite{Wickramasekera}) we find that $N=S_{\Lambda_+}$, which is a contradiction, since then $N$ does not decay towards $S_{\lambda_1}$ at rate $|x|^{(2-n)/2}$. It follows that $N$ lies on the negative side of any $S_\Lambda$ with $\Lambda > \lambda_1$. Arguing similarly with $\Lambda < \lambda_1$ we find that in fact $N=S_{\lambda_1}$, which contradicts the assumption that $\widetilde M^\infty_1 \not= \widetilde M^\infty_2$. 
\end{proof}

We can finally assume that $\widetilde M_1^\infty=\widetilde M_2^{\infty }$. The proof of Theorem~\ref{main:maintheo} is completed by the following. 
\begin{prop}\label{prop:same limit}
    Suppose that $\widetilde M_1^\infty = \widetilde M_2^\infty$. Then $M^i_1=M^i_2$ for sufficiently large $i$. 
\end{prop}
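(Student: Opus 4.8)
Here is my plan for proving Proposition~\ref{prop:same limit}.

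\medskip

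The strategy is a second normalization-and-blowup argument, this time carried out at the scale $\rho_0^{N_i}$ where we have already passed to the limit $\widetilde M_1^\infty = \widetilde M_2^\infty$. First I would record what the hypothesis $\widetilde M_1^\infty = \widetilde M_2^\infty$ buys us: since $\widetilde M_1^i, \widetilde M_2^i$ both converge to the \emph{same} limit $\widetilde M^\infty := \widetilde M_1^\infty = \widetilde M_2^\infty = a + Q(S_\lambda)$, and since by construction (Lemma~\ref{lem:graph}, rescaled) both $\widetilde M_j^i$ are $\epsilon$-graphical over $\mathcal{C}$ on $B_{\rho_0^{N_i}(1-\epsilon)}\setminus B_1$, the hypersurface $\widetilde M_2^i$ can be written as the graph of a function $U_i$ over $\widetilde M_1^i$ on the annulus $B_1(\rho_0^{-N_i}x_1^i)\setminus B_{\rho_0}$ (say), with $\Vert U_i\Vert_{C^{2,\alpha}_1}\to 0$ on compact subsets as $i\to\infty$. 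If $U_i = 0$ for infinitely many $i$ we are done, so assume $U_i\not\equiv 0$ for all large $i$, and set $\hat U_i := U_i/\Vert U_i\Vert_{\rho_0, 0}$ (or an equivalent $L^2$ normalization on a fixed annulus).

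\medskip

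Next I would pass to a limit $\hat U_i \to W$. As in the proof of Lemma~\ref{lemma:k_i bounded}, the equation $\mathcal{M}_{\widetilde M_1^i}(U_i)=0$ together with $U_i=0$ on the (rescaled) boundary sphere lets us view the normalized functions as solving a homogeneous linear equation with $C^\alpha$ coefficients converging to those of $\mathcal{L}_{\widetilde M^\infty}$; Schauder estimates on compact subsets give a subsequential limit $W$ on $\widetilde M^\infty \setminus B_1$ with $\mathcal{L}_{\widetilde M^\infty} W = 0$. The crucial point is that the three annulus estimate \eqref{eq:uigrowth2} survives the rescaling and the limit: for every integer $k \le -1$ we get $\Vert W\Vert_{\rho_0, k-1}\leq \rho_0^{(n-2)/2}\Vert W\Vert_{\rho_0, k}$, hence $|W(x)|\leq C|x|^{(2-n)/2}$ as $|x|\to\infty$. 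One must also check $W\not\equiv 0$: this is where the choice of $N_i$ and the normalization by $\Vert U_i\Vert_{\rho_0,0}$ (rather than by the $L^2$ norm on the full annulus) matters, since $N_i$ was chosen so that the divergence rate is borderline at scale $\rho_0^{N_i}$, forcing the limit to carry nontrivial mass on a fixed annulus — I would argue, as in Lemma~\ref{lemma:k_i bounded}, that the lower three-annulus bound \eqref{eq:uigrowth2} prevents $\Vert \hat U_i\Vert_{\rho_0, k}$ from becoming negligible near $k=0$, so $W\neq 0$. Since $\widetilde M^\infty = a + Q(S_\lambda)$ is (up to rigid motion) one of $\mathcal{C}, S_+, S_-$, this $W$ corresponds to a Jacobi field on $S_\lambda$ (case $\lambda\neq 0$) or on $\mathcal{C}$ vanishing on $\Sigma$ (case $\lambda = 0$, using the constraint $a_1 = 0$ from Proposition~\ref{prop:Milimit} so that $\widetilde M^\infty$ genuinely has the cone's boundary behaviour), with decay $|W(x)| = O(|x|^{(2-n)/2})$. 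Part (1) of Lemma~\ref{prelim:GammaBound} (resp. part (2)) then forces $W\equiv 0$, a contradiction.

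\medskip

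I expect the main obstacle to be a \emph{scale-matching} subtlety rather than the blowup itself: one has to be careful that the annulus on which $\widetilde M_2^i$ is written as a graph over $\widetilde M_1^i$ is exactly the one on which the growth inequality \eqref{eq:uigrowth2} holds, and that the two normalizations (the $L^2$ normalization used to extract $W$, and the per-annulus norms $\Vert\cdot\Vert_{\rho_0,k}$ in which the three annulus lemma is phrased) are compared with the correct constants uniformly in $i$. A second delicate point is the passage of the three annulus inequality to the limit near the ``inner'' end: \eqref{eq:uigrowth2} only holds for $K < k \le N_i - 1$, i.e. on annuli that after rescaling sit in $B_1 \setminus B_{\rho_0 \tilde r_2^i / \rho_0^{N_i}}$, and since $\tilde r_2^i/\rho_0^{N_i}\in (\rho_0, 1]$ this is a genuinely inner region of bounded (but not shrinking) size; I would note that $W$ is a priori only defined on $\widetilde M^\infty\setminus B_1$, and that is exactly the region where the decay estimate is needed for Lemma~\ref{prelim:GammaBound}, so no extension to the origin is required. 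Modulo these bookkeeping issues the argument is a direct rerun of Lemma~\ref{lemma:k_i bounded} one scale deeper, and closing it yields $U_i\equiv 0$, hence $\widetilde M_1^i = \widetilde M_2^i$ and therefore $M_1^i = M_2^i$ for large $i$.
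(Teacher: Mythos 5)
Your proposal captures the blow-up framework and the three-annulus decay estimate correctly, but it has a genuine gap that the paper's proof is organized around. You propose to handle both the $\lambda \neq 0$ case (via Lemma~\ref{prelim:GammaBound}(1)) and the $\lambda = 0$ case (via Lemma~\ref{prelim:GammaBound}(2) ``using the constraint $a_1 = 0$''), and you assert that the Jacobi field $W$ need only be defined on $\widetilde M^\infty \setminus B_1$, so that ``no extension to the origin is required.'' Both of these claims fail.

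First, Lemma~\ref{prelim:GammaBound}(2) concerns a Jacobi field on $\mathcal{C} \cap B_1$ that \emph{vanishes on $\Sigma$} — the inner region with an outer boundary condition. Your $W$ in the putative cone case would live on the outer region $\mathcal{C} \setminus B_1$ with no boundary condition on $\Sigma$ at all; the decomposition \eqref{eqn: u} gives no contradiction there, since the coefficients $c_i^-$ are unconstrained. Second, and more fundamentally, Lemma~\ref{prelim:GammaBound}(1) requires $W$ to be a Jacobi field on \emph{all} of $S_\lambda$: its proof compares $W$ against the positive Jacobi field $w_0$ generated by the foliation and locates a touching point via the maximum principle — if $W$ is only defined on $S_\lambda \setminus B_1$, the touching point could land on the inner boundary $\partial B_1$, and no contradiction follows. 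The extension into $B_1$ is precisely the nontrivial content.

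The paper resolves both issues simultaneously by first ruling out the cone case: by the choice of $N_i$ and monotonicity, $\Theta_{\widetilde M^i_2}(\rho_0, 0) \leq \Theta(\mathcal{C}) - \tau$, and passing to the limit gives $\Theta_{\widetilde M^\infty_1}(\rho_0, 0) \leq \Theta(\mathcal{C}) - \tau$, so $\widetilde M^\infty_1$ cannot be a cone. Combined with Proposition~\ref{prop:Milimit} this forces $\lambda_1 \neq 0$, i.e.\ $\widetilde M^\infty_1 = a_1 + Q_1(S_{\lambda_1})$ is smooth. \emph{Because} the limit is smooth, Allard's regularity theorem then upgrades the graphicality of $\widetilde M^i_2$ over $\widetilde M^i_1$ from the exterior annulus to all of $B_{\rho_0^{-2}}$, so $U_i$ (and hence $W$) is defined globally on $S_{\lambda_1}$. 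Only then does Lemma~\ref{prelim:GammaBound}(1) apply. Your proposal skips the density step, leaves $\lambda = 0$ in play, and as a result has no mechanism to extend $W$ inside $B_1$; the argument as you wrote it does not close.
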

\begin{proof}
By the choice of $N_i$ and by the monotonicity formula we have that
$$ \Theta_{\widetilde M_2^i}\left(\frac{\rho_0^{N_i+1}}{\rho_0^{N_i}}, 0\right) \leq \Theta(\mathcal C)-\tau .$$
As $i\rightarrow \infty$, this implies that
$$\Theta_{\widetilde M_1^\infty} (\rho_0, 0) \leq \Theta(\mathcal C)-\tau. $$
This implies that $\widetilde M^\infty_1$ cannot be a cone, and so by Proposition~\ref{prop:Milimit} we find that $\widetilde M^\infty_1$ is smooth. Namely $\widetilde M^\infty_1 = a_1 + Q_1(S_{\lambda_1})$ for some $\lambda_1\not=0$. 

Recall that outside of the unit ball, by the construction we know that $\widetilde M^i_2$ is graphical over $\widetilde M^i_1$. The fact that $\widetilde M^\infty_1$ is smooth, together with Allard's regularity theorem implies that for large enough $i$, $\widetilde{M}^i_2$ is the graph of a function $U_i$ over $\widetilde{M}^i_1$ inside the unit ball as well. 

Consider the normalized functions
\[ U_i^o(x)\coloneqq  \frac{U_i (x)}{\|U_i \|_{L^2(B_{\rho_0^{-2}})}}\]
over $\widetilde{M}^i_1$. Note that we have the decay estimate $|U_i^o(x)| \leq C|x|^{\frac{2-n}{2}}$ for $|x| > 1$. Similarly to the argument in the proof of Lemma~\ref{lemma:k_i bounded} above, we find that if $M^i_1\not= M^i_2$ along a sequence, then along a further subsequence the $U_i^o$ converge to a nonzero Jacobi field $w$ on $\widetilde M^\infty_1$. The convergence is smooth on compact sets, and $w$ satisfies $|w(x)|\leq C|x|^{\frac{2-n}{2}}$ for $|x| > 1$. 

By Lemma \ref{prelim:GammaBound} there is no such non-zero Jacobi field on $S_\lambda$, for $\lambda\not=0$. Therefore we get a contradiction. 
\end{proof}

This completes the proof of Theorem~\ref{main:maintheo}.
We finally prove Theorem~\ref{thm:main} by relaxing the assumption of the mass bound in Theorem~\ref{main:maintheo}. 

\begin{proof}[Proof of Theorem~\ref{thm:main}]
Let $\epsilon > 0$ be small. For $g$ with $\Vert g\Vert_{C^{2,\alpha}(\Sigma)} < \epsilon$, let us denote by $M^P_g$ the solution of the Plateau problem with boundary given by the graph $\Sigma_g$ of $g$ over $\Sigma$. Comparing with the union of $\mathcal{C}\cap B_1$ with the hypersurface defined by the region between $\Sigma$ and $\Sigma_g$, we find that for small $\epsilon$ we must have $\Vert M^P_g\Vert < \frac{3}{2}\Theta(\mathcal{C})$. It follows from Theorem~\ref{main:maintheo} that the solutions of the Plateau problem for such boundary values $g$ are unique. As a consequence these solutions vary continuously with $g$, in the sense that if $g_i \to g$ in $C^{2,\alpha}$, then $M^P_{g_i} \to M^P_g$ as varifolds. By Proposition~\ref{prop:FromAllard} we also know that these $M^P_g$ all have good graphicality over $\mathcal{C}$ on the annulus $B_1\setminus B_{1/2}$. Let us choose $\epsilon_0 > 0$ small enough such that the solutions of the Plateau problem satisfy these properties for $\Vert g\Vert_{C^{2,\alpha}} < \epsilon_0$. 

We can now implement the strategy outlined in the introduction to show that even without the mass bound, there is a unique minimal hypersurface with boundary $\Sigma_g$ for small enough $g$. Note first that by using the Hardt-Simon surfaces $S_\lambda$ as barriers, we know that if $|g| < \epsilon$, then any minimal hypersurface with boundary $\Sigma_g$ is contained between $S_{\pm \lambda(\epsilon)}$, where $\lambda(\epsilon)\to 0$ as $\epsilon\to 0$. In fact we can take $S_{\pm \lambda(\epsilon)} = M^P_{\pm \epsilon}$ (i.e. the minimal surfaces with constant boundary values).

Suppose now that $\Vert g\Vert_{C^{2,\alpha}} < \epsilon_0/3$, and let $M_{g}$ be any minimal hypersurface with boundary $\Sigma_{g}$. By the above discussion we know that $M_{g}$ lies on the negative side of $M^P_{\epsilon_0/3}$. At the same time, for $t > 2\epsilon_0/3$ we have $g+t > \epsilon_0/3$, so the Plateau problem solution $M^P_{g+t}$ lies on the positive side of $M^P_{\epsilon_0/3}$. We can now let $T_+$ denote the infimum of the $t > 0$ such that $M_g$ lies on the negative side of $M^P_{g+t}$. If $T_+ > 0$, then $M_g$ must have an interior contact point with $M^P_{g+T_+}$, using that the $M^P_{g+t}$ vary continuously with $t$. At the same time $M^P_{g+T_+}$ lies on the positive side of $M_g$. We can apply the strong maximum principle of Wickramasekera~\cite[Theorem 1.1]{Wickramasekera} with $V_1 = M^P_{g+T_+}$ and $V_2 = M_g$. Note that since $M^P_{g+T_+}$ is area minimizing, its singular set has dimension at most $n-7$. Since the supports of $V_1, V_2$ intersect, it follows that they are equal, but this is not possible since they have different boundary values, so we must have $T_+=0$, i.e. $M_g$ lies on the negative side of $M^P_g$. Similarly we find that $M_g$ lies on the positive side of $M^P_g$, so necessarily $\mathrm{spt}\, M_g = \mathrm{spt}\, M^P_g$. The boundary condition implies that $M_g$ has multiplicity one.   This completes the proof of uniqueness. 
\end{proof}

\printbibliography

\end{document}